\documentclass[12pt]{amsart}
\usepackage[T1,T5]{fontenc} 
\usepackage{amsfonts, amsbsy, amsmath, amssymb}

\usepackage[T1]{fontenc}
\usepackage[utf8]{inputenc}
\usepackage{lmodern}
\usepackage{amsmath, amsthm, amssymb,amscd, mathrsfs, amsfonts, mathtools,tikz-cd}
\usepackage{hyperref}
\usepackage{euler}
\usepackage{times}
\usepackage[all]{xy}
\usepackage{todonotes}
\usepackage{xcolor}
\usepackage[lite]{amsrefs}

\renewcommand{\PrintDOI}[1]{\href{http://dx.doi.org/\detokenize{#1}}{doi: \detokenize{#1}}%
  \IfEmptyBibField{pages}{, (to appear in print)}{}}

\def\commutatif{\ar@{}[rd]|{\circlearrowleft}}

\newtheorem{thm}{Theorem}[section]
\newtheorem{pro}[thm]{Proposition}
\newtheorem{lem}[thm]{Lemma}
\newtheorem{cor}[thm]{Corollary}
\newtheorem{conj}[thm]{Conjecture}

\theoremstyle{definition}
\newtheorem{definition}[thm]{Definition}

\newtheorem*{ack}{Acknowledgments}

\theoremstyle{remark}
\newtheorem{rmk}[thm]{Remark}

\allowdisplaybreaks

\newcommand{\Id}{\operatorname{id}}
\newcommand\Hom{\operatorname{Hom}}

\newcommand{\C}{\mathbb{C}}
\newcommand{\R}{\mathbb{R}}

\newcommand{\Z}{\mathbb{Z}}

\newcommand{\I}{\mathcal{I}}
\newcommand{\A}{\widehat{A}}
\newcommand{\Alpha}{\widehat{\alpha}}
\renewcommand{\phi}{\varphi}
\renewcommand{\epsilon}{\varepsilon}
\newcommand{\Mid}{_{\operatorname{mid}}}
\newcommand{\supp}{\operatorname{supp}}
\newcommand{\PHI}{\widetilde{\Phi}}
\newcommand{\PSI}{\widetilde{\Psi}}
\newcommand{\inv}{^{-1}}

\usepackage{bbm}

\def\Aut{\operatorname{Aut}}
\def\End{\operatorname{End}}

\def\Core{\operatorname{Core}}
\def\Alex{\operatorname{Alex}}

\hypersetup{
  colorlinks = true,
  urlcolor = blue,
  linkcolor = blue,
  citecolor = red,
  pdfauthor = {Mohamed Elhamdadi, Lực Ta, and Bryce Virgin},
  pdfkeywords = {Quandles, ring quandle},
  pdftitle = {Fourier Analysis and Idempotents in Quandle Algebras},
  pdfsubject = { racks, quandles},
  pdfpagemode = UseNone
}

\title{Fourier Analysis and Idempotents in Quandle Algebras}

\author{Mohamed Elhamdadi}
\address{Department of Mathematics and Statistics, University of South Florida, Tampa, FL, USA}
\email{emohamed@usf.edu}

\author{{\fontencoding{T5}\selectfont L\d\uhorn c} Ta}
\address{Department of Mathematics, University of Pittsburgh, Pittsburgh, PA, USA}
\email{ldt37@pitt.edu}

\author{Bryce Virgin}
\address{Department of Mathematics and Statistics, University of South Florida, Tampa, FL, USA}
\email{bgvirgin@usf.edu}

\begin{document}

\maketitle 

\begin{abstract}
    In 2023, the first author, Nunez, Singh, and Swain \cite{MR4565221} formulated an analogue of Kaplansky's idempotent conjecture for integral quandle rings. In this paper, we develop a new Fourier-analytic approach to quandle rings and use it to establish the conjecture for two major classes of quandles: Takasaki quandles, including dihedral quandles, and medial commutative quandles.
    
A central contribution of the paper is the introduction of Fourier analysis on Alexander quandles, which provides a new framework for studying quandle rings and, in particular, their idempotents.
Using this Fourier-analytic framework, we also prove that a previously known sufficient condition for the existence of counterexamples to the conjecture is in fact necessary, thereby resolving a problem of Jabłonowski \cite{counterexample}. 
\end{abstract}

\section{Introduction}

In 1982, Joyce \cite{Joyce} and Matveev \cite{Matveev} independently introduced non-associative algebraic structures called \emph{quandles} to construct invariants of knots and links \cite{EN}. Since then, quandles and more general objects called \emph{racks} have enjoyed applications to mathematical physics \cite{EB}, algebraic geometry \cite{ta}, and the theory of the set-theoretic Yang--Baxter equation \cite{BES}, among other areas of mathematics. 

In the same way that one studies groups $G$ via their group rings $\mathbf k[G]$, one may study quandles $X$ via their \emph{quandle rings} $\mathbf k[X]$, which Bardakov, Passi, and Singh \cite{BPS} introduced in 2019. In 2023, the first author, Nunez, Singh, and Swain \cite{MR4565221} posed the following quandle-theoretic analogue of Kaplansky's idempotent conjecture. The conjecture is related to certain knot invariants and has become one of the most widely studied conjectures in quandle theory in recent years; see \cites{bardakov-elhamdadi-2026, BES, BPS-2, MR4579329, churchill, ta-2} for previous work on the conjecture.

\begin{conj}\label{Main conjecture}
The integral quandle ring $\Z[X]$ of a semi-latin quandle $X$ has only trivial idempotents. In particular, the integral quandle ring of a finite latin quandle has only trivial idempotents.
\end{conj}

Recently, Jab\l onowski \cite{counterexample} proposed a class of counterexamples to Conjecture \ref{Main conjecture}, the smallest of which is the Alexander quandle determined by multiplication by $18$ in $\Z/37\Z$. To better understand these and potential other counterexamples, it is important to determine which classes of quandles \emph{do} satisfy Conjecture \ref{Main conjecture}; this is Question 9.2 in \emph{op.\ cit.} Since Jab\l onowski's proof passes from $\Z$ to $\C$ and uses character theory, it is also important to study complex quandle algebras $\C[X]$ from the point of view of representation theory when $X=\Alex(A,\phi)$ is an Alexander quandle. The results of this paper fall within this program.

In this paper, we introduce Fourier analysis on $\sigma$-compact, locally compact Alexander quandles, which we use to prove Conjecture \ref{Main conjecture} for two major classes of quandles and solve an open problem of Jab\l onowski. In more detail, we prove Conjecture \ref{Main conjecture} for semi-latin Takasaki quandles (Theorem \ref{thm:takasaki})---including all latin dihedral quandles---and all medial commutative quandles (Theorem \ref{thm:comm}). On the other hand, Jab\l onowski posed the question of whether his sufficient condition for certain Alexander quandles to fail Conjecture \ref{Main conjecture} is also a necessary condition; see Question 9.2 in \emph{op.\ cit.} We give a positive answer to this problem (Theorem \ref{thm:jab}). As a consequence, we obtain Conjecture \ref{Main conjecture} for Alexander quandles on $\Z/p\Z$ for primes $p\not\equiv 1\pmod{12}$ (Corollary \ref{cor:primes}).

Theorem \ref{thm:takasaki} vastly improves on several results in the literature, including \cite{MR4565221}*{Prop.\ 3.11}. 
Since every ordered abelian group is torsion-free (hence 2-torsionless), Theorem \ref{thm:takasaki} applies to all Takasaki quandles of ordered abelian groups. Therefore, Theorem \ref{thm:takasaki} solves \cite{bardakov-elhamdadi-2026}*{Ques.\ 3.2} in the case that $\mathbf{k}=\Z$. Moreover, Theorem \ref{thm:takasaki} (or, more specifically, Proposition \ref{pro:finite-takasaki}) proves Conjecture \ref{Main conjecture} for all latin dihedral quandles $R_n=T(\Z/n\Z)$. This vastly generalizes \cite{BPS-2}*{Prop.\ 4.3}, \cite{MR4565221}*{Cor.\ 3.12}, and \cite{churchill}*{Sec.\ 4}. Similarly, Theorem \ref{thm:comm} solves \cite{bardakov-elhamdadi-2026}*{Ques.\ 5.9} and generalizes \cite{churchill}*{Sec.\ 4}.

The structure of this paper is as follows. 
In Section \ref{review}, we review the definitions of quandles and quandle rings. In Section \ref{sec:alexander}, we introduce Fourier analysis on $\sigma$-compact, locally compact Alexander quandles, which we apply to study quandle algebras of finite latin Alexander quandles over $\C$ and $\Z$. We apply these Fourier-analytic results in Sections \ref{sec:takasaki}, \ref{sec:medial-comm}, and \ref{sec:jab} to prove Theorems \ref{thm:takasaki}, \ref{thm:comm}, and \ref{thm:jab}, respectively.

\section{Review of quandles and quandle rings}\label{review}

\subsection{Quandles}
We start by recalling some basic definitions and examples.  More details can be found in \cites{Joyce, Matveev, EN, BES}.

\begin{definition}
A \emph{rack} $(X,*)$ is a set  with a binary operation such that each right multiplication $S_x\colon X \rightarrow X$, $y \mapsto y*x$, is an automorphism of $(X,*)$.  If furthermore, $x*x=x$ for all $x \in X$, then  $(X,*)$ is called a \emph{quandle}.
\end{definition}
\begin{definition}
    Let $(X,*)$ and $(Y,\cdot)$ be racks. A map $f\colon X\to Y$ is called a \emph{rack homomorphism} if $f(w*x)=f(w)\cdot f(x)$ for all $w,x\in X$.
\end{definition}
The following are a few examples of quandles.
\begin{itemize}
\item
 A quandle is called \emph{trivial} if $x*y=x$ for all $ x ,y\in X$.

\item
For a group $G$ and an automorphism $\phi \in {\rm Aut}(G)$, the binary operation $x*y=\phi (xy^{-1}) y $ makes $G$ into a quandle called a \emph{generalized Alexander quandle}. Although the inversion map $\iota(x):=x^{-1}$ is an automorphism if and only if $G$ is abelian, choosing $\phi:=\iota$ yields the \emph{core quandle} $\Core(G)$ even if $G$ is nonabelian.

\item
For a group $G$, the operation $x*y=y^{-1}xy$ gives a quandle structure on $G$ called a \emph{conjugation quandle}.

\end{itemize}

A quandle $(X, *)$ is called {\it latin} (resp.\ \emph{semi-latin}) if every left multiplication map $L_y\colon X \to X$, which is defined by $L_y(x)=y*x$ for $y \in X$, is a bijection (resp.\ injection).  Every latin quandle is clearly semi-latin, but not conversely. For example, $\Core(\mathbb{Z})$ is semi-latin but not latin.  

\subsection{Quandle rings}
Let $(X, *)$ be a quandle and $\mathbf{k}$ an integral domain with unity. To each $x \in X$, we assign a unique symbol $e_x$. 
Let $\mathbf{k}[X]$ be the set of all formal expressions of the form $\sum_{x \in X }  a_x e_x$, where $a_x \in \mathbf{k}$ such that all but finitely many $a_x=0$.  The set $\mathbf{k}[X]$ has a free $\mathbf{k}$-module structure with basis $\{e_x \mid x \in X \}$ and admits a product given by 
 $$ \Big( \sum_{x \in X }  a_x e_x \Big) \Big( \sum_{ y \in X }  b_y e_y \Big)
 =   \sum_{x, y \in X } a_x b_y e_{x * y},$$
where $x, y \in X$ and $a_x, b_y \in \mathbf{k}$. This turns $\mathbf{k}[X]$ into a generally non-associative $\mathbf k$-algebra called the {\it quandle ring} or \emph{quandle algebra} of $X$ with coefficients in $\mathbf{k}$. Note that $\mathbf k[X]$ is associative if and only if $X$ is a trivial quandle. This construction defines a functor from quandles to non-associative $\mathbf k$-algebras. The quandle $X$ can be identified as a subset of $\mathbf{k}[X]$ via the natural map $x \mapsto e_x$. 
\par
The surjective ring homomorphism $\varepsilon\colon \mathbf{k}[X] \rightarrow \mathbf{k}$ given by $$\varepsilon \Big(\sum_{x \in X }  \alpha_x e_x \Big)=\sum_{x \in X }  \alpha_x$$
is called the {\it augmentation map}. The kernel of $\varepsilon $ is a two-sided ideal of $\mathbf{k}[X]$ called the {\it augmentation ideal} of $\mathbf{k}[X]$. 
For more on quandle rings, see \cite{BES}. 

\subsubsection{Idempotents in quandle rings}
Let $X$ be a quandle and $\mathbf{k}$ an integral domain with unity.  A nonzero element $u \in \mathbf{k}[X]$ is called an {\it idempotent} if $u^2=u$. We denote the set of all idempotents of $\mathbf{k}[X]$ by $\mathcal{I}\big(\mathbf{k}[X]\big)$, that is,
$$
\mathcal{I}\big(\mathbf{k}[X] \big)=\big\{ u  \in \mathbf{k}[X]\; | \; u^2=u\big \}.
$$
It is clear that the basis elements $\{e_x \mid x \in X\}$ are idempotents of $\mathbf{k}[X]$, and we refer them as {\it trivial idempotents}.  A \emph{nontrivial idempotent}  is an element of $\I(\mathbf{k}[X])$ that is not of the form $e_x$ for any $x \in X$. 
Note that since $\epsilon\colon\mathbf k[X]\to\mathbf k$ is a ring homomorphism and $\mathbf k$ is an integral domain, we have $\epsilon(u)\in\{0,1\}$ for all nonzero idempotents $u\in\mathcal I(\mathbf k[X])$.

\section{Alexander quandles}\label{sec:alexander}

In this section, we study Alexander quandles of $\sigma$-compact, locally compact abelian groups by way of Fourier analysis. Later in this paper, we use these results to obtain restrictions on the coefficients $\alpha_x$ of each nonzero idempotents $u\in\Z[\Alex(A,\phi)]$ for certain Alexander quandles $\Alex(A,\phi)$.

Given an additive abelian group $A$ and an automorphism $\phi\in\Aut(A)$, recall that the \emph{Alexander quandle} $\Alex(A,\phi)$ is the set $A$ equipped with the medial quandle operation
\[
x*y\coloneq \phi(x)+(\Id-\phi)(y).
\]
Note that $\Alex(A,\phi)$ is latin if and only if the endomorphism $\Id-\phi$ is invertible.
It is worth mentioning that every nonempty medial latin quandle is isomorphic to an Alexander quandle (see \cite{ta-2026}*{Prop.\ 4.1}), and all of the hypotheses in this statement are necessary.

For example, if $\phi$ is the inversion map $x\mapsto -x$, then $\Alex(A,\phi)$ is called a \emph{Takasaki quandle} and denoted by $T(A)$. Note that $T(A)=\Core(A)$, and $R_n\coloneq T(\Z/n)$ is called the \emph{dihedral quandle} of order $n$. 

\subsection{Analysis on Alexander quandles}
In the following, let $A$ be a $\sigma$-compact, locally compact abelian group, and let $\phi\in\Aut(A)$. Let $\mu$ be a Haar measure of $A$, and let $\Delta_A\colon\Aut(A)\to\R_{>0}$ be the function that sends each automorphism $\phi\in\Aut(A)$ to its modulus. Given $y\in A$, recall that $S_y\in\Aut(\Alex(A,\phi))$ and $L_y\colon A\to A$ denote the right multiplication map $x\mapsto x*y$ and the left multiplication map $x\mapsto y*x$, respectively. 

\begin{pro}\label{pro:l1}
    For each $\phi\in\Aut(A)$, the algebra $L^1(A,\mu,\C)$ has a well-defined product
    \begin{align*}
        (\alpha \beta)(x) &:= \Delta_A\phi^{-1}\int_{ A}(\alpha \circ S_y^{-1})(x) \beta(y)d\mu(y)\\
        &= \Delta_A\phi^{-1}\int_{ A}\alpha (\phi^{-1}(x)+(\Id-\phi^{-1})(y) ) \beta(y)d\mu(y).
    \end{align*}
    If moreover $\Alex(A,\phi)$ is latin, then in fact \[ (\alpha \beta)(x) = \Delta_A(\Id-\phi)^{-1} \int_{ A}\alpha (y')   \beta(L_{y'}^{-1}(x))d\mu(y'). \]
\end{pro}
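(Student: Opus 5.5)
First I will check that the two formulas for the integrand agree, and then prove that the integral defines an element of $L^1$ using Fubini--Tonelli together with the change of variables for automorphisms. Since $S_y(x)=\phi(x)+(\Id-\phi)(y)$, its inverse is $S_y^{-1}(x)=\phi^{-1}(x-(\Id-\phi)(y))=\phi^{-1}(x)-\phi^{-1}(y)+y=\phi^{-1}(x)+(\Id-\phi^{-1})(y)$. This gives the equality of the two displayed expressions.

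\textbf{Well-definedness.} The map $(x,y)\mapsto S_y^{-1}(x)$ is continuous, so $F(x,y)=\alpha(S_y^{-1}(x))\beta(y)$ is measurable on $A\times A$. Here $\sigma$-compactness makes $\mu$ $\sigma$-finite, so Fubini--Tonelli applies. It also ensures that the function is independent, up to null sets, of the chosen representatives of $\alpha$ and $\beta$. For fixed $y$, the map $x\mapsto \phi^{-1}(x)+c$ is an automorphism followed by a translation. By the definition of the modulus, $\mu(\psi(E))=\Delta_A(\psi)\mu(E)$, so $\int|\alpha(\phi^{-1}(x)+c)|\,d\mu(x)=\Delta_A(\phi)\|\alpha\|_1$. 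Tonelli then gives
\[\int\!\!\int |F|\,d\mu(x)\,d\mu(y)=\Delta_A(\phi)\|\alpha\|_1\|\beta\|_1<\infty.\]
Hence $(\alpha\beta)(x)$ converges absolutely for almost every $x$, it is measurable, and
\[\|\alpha\beta\|_1\le \Delta_A(\phi^{-1})\Delta_A(\phi)\|\alpha\|_1\|\beta\|_1=\|\alpha\|_1\|\beta\|_1.\]
Bilinearity is clear, so the product is a well-defined bounded bilinear map on $L^1$. The normalization $\Delta_A\phi^{-1}$ is exactly what makes the product contractive. It also makes the product compatible with the augmentation $\alpha\mapsto\int\alpha$, in the sense that $\int\alpha\beta=\int\alpha\int\beta$.

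\textbf{The latin case.} If $\Id-\phi$ is an automorphism, I substitute $y'=\phi^{-1}(x)+(\Id-\phi^{-1})(y)$ in the inner integral with $x$ fixed. This is affine in $y$ with linear part $\Id-\phi^{-1}=-\phi^{-1}(\Id-\phi)$, which is invertible. Solving for $y$: applying $\phi$ gives $\phi(y')=x-(\Id-\phi)(y)$, hence $y=(\Id-\phi)^{-1}(x-\phi(y'))=L_{y'}^{-1}(x)$, because $L_{y'}(y)=\phi(y')+(\Id-\phi)(y)$. For the Jacobian, $d\mu(y)=\Delta_A\bigl((\Id-\phi^{-1})^{-1}\bigr)\,d\mu(y')$. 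Since $\Delta_A$ is a homomorphism and $\Delta_A(-\Id)=1$, this factor equals $\Delta_A(\phi)\,\Delta_A(\Id-\phi)^{-1}$. Multiplying by the prefactor $\Delta_A(\phi)^{-1}$ leaves $\Delta_A(\Id-\phi)^{-1}$, which is the claimed formula. Here I read $\Delta_A(\Id-\phi)^{-1}$ as $\Delta_A((\Id-\phi)^{-1})$, which is the same number either way.

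The main obstacle is getting the modulus bookkeeping right, namely the direction convention $\mu(\psi E)=\Delta_A(\psi)\mu(E)$ and its consequence $\int f\circ\psi^{-1}\,d\mu=\Delta_A(\psi)\int f\,d\mu$. I will fix this convention at the start and check both substitutions against it. Everything else is routine measure theory.
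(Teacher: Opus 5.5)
Your proposal is correct and follows essentially the same route as the paper. In the well-definedness part you apply Tonelli together with $d\mu(\phi(x'))=\Delta_A(\phi)\,d\mu(x')$ to get $\|\alpha\beta\|_1\le\|\alpha\|_1\|\beta\|_1$, using the same modulus convention as the paper. In the latin case you make the same substitution $y=L_{y'}^{-1}(x)=(\Id-\phi)^{-1}(x-\phi(y'))$ and use $\Delta_A(-\Id)=1$ to produce the factor $\Delta_A(\Id-\phi)^{-1}$; your extra remarks on measurability, independence of representatives, and $\int\alpha\beta=\int\alpha\int\beta$ are correct additions rather than a different argument.
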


\begin{proof}
    The product is well-defined because
    \begin{align*}
        \int_A |\alpha \beta|(x) d\mu(x) &= \Delta_A\phi^{-1}\int_A  \left|\int_{ A}(\alpha \circ S_y^{-1})(x)\beta(y)d\mu(y)  \right| d\mu(x)\\
        &\leq\Delta_A\phi^{-1}\int_A   \int_{ A}\left|(\alpha \circ S_y^{-1})(x)\beta(y) \right| d\mu(x)   d\mu(y)\\
        &=\Delta_A\phi^{-1}\int_A   \int_{ A}|\alpha (x')||\beta(y)| d\mu(\phi(x') +(\text{Id}-\phi)(y) )   d\mu(y)\\
        &=\Delta_A\phi^{-1}\int_A   \int_{ A}|\alpha (x')||\beta(y)| d\mu(\phi(x')   )   d\mu(y)\\
        &=\int_A   \int_{ A}|\alpha (x')||\beta(y)| d\mu(x'   )   d\mu(y)   = \Vert \alpha \Vert_1 \Vert \beta \Vert_1 < \infty.
    \end{align*}
    Our use of the Fubini--Tonelli theorem in the second line is justified by the $\sigma$-compactness of $A$ (and consequent $\sigma$-finiteness of $(A,\mu)$). 

    If moreover $\Alex(A,\phi)$ is latin, then 
    \begin{align*}
        (\alpha \beta)(x) &= \Delta_A\phi^{-1}\int_{ A}(\alpha \circ S_y^{-1})(x) \beta(y)d\mu(y)\\
        &=\Delta_A\phi^{-1}\int_{ A}(\alpha \circ S_{L_{y'}^{-1}(x)}^{-1})(x) \beta(L_{y'}^{-1}(x))d\mu(L_{y'}^{-1}(x))\\
        &= \Delta_A\phi^{-1}\int_{ A}(\alpha \circ S_{L_{y'}^{-1}(x)}^{-1})(x) \beta(L_{y'}^{-1}(x))d\mu((\Id -\varphi)^{-1}(x - \varphi(y')))\\
        &= \Delta_A(\Id-\phi)^{-1} \int_{ A}\alpha (y')   \beta(L_{y'}^{-1}(x))d\mu(y'),
    \end{align*}
    as desired.
\end{proof}

\subsubsection{A key identity}
Given an additive abelian group $A$, let $\A\coloneq\Hom(A,S^1)$ be its Pontryagin dual. Given $\alpha\in L^1(A,\mu,\C)$, let $\Alpha\colon\A\to\C$ denote the Fourier transform of $\alpha$. Proposition \ref{pro:l1} leads to the following key identity.

\begin{thm} 
    Let $\Alex(A,\phi)$ be a $\sigma$-compact, locally compact latin Alexander quandle, and let $\alpha,\beta\in L^1(A,\mu,\mathbb{C}) $. Then
    \[
    \widehat{\alpha \beta} (\chi)=\Alpha (\chi\circ\phi)\hat{\beta} (\chi\circ(\Id-\phi))
    \]
    for all $\chi\in\A$.
\end{thm}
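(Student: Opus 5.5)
We compute the Fourier transform directly from the product formula of Proposition \ref{pro:l1}, using the convention $\widehat{\gamma}(\chi)=\int_A \gamma(x)\overline{\chi(x)}\,d\mu(x)$. The sign convention does not matter, since the argument only uses that $\chi$ is a homomorphism into $S^1$.

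The plan is to write
\[
\widehat{\alpha\beta}(\chi)=\Delta_A\phi^{-1}\int_A\int_A \alpha\bigl(\phi^{-1}(x)+(\Id-\phi^{-1})(y)\bigr)\beta(y)\,\overline{\chi(x)}\,d\mu(y)\,d\mu(x).
\]
The integrand is absolutely integrable on $A\times A$ by the estimate in the proof of Proposition \ref{pro:l1}, since $|\overline{\chi(x)}|=1$. Together with $\sigma$-finiteness, this lets us apply Fubini's theorem and integrate in $x$ first.

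For fixed $y$, substitute $x=S_y(x')=\phi(x')+(\Id-\phi)(y)$. Then $d\mu(x)=\Delta_A\phi\,d\mu(x')$, so this factor cancels the prefactor $\Delta_A\phi^{-1}$, exactly as in the well-definedness computation. Since $\chi$ is a character,
\[
\overline{\chi(x)}=\overline{\chi(\phi(x'))}\;\overline{\chi((\Id-\phi)(y))}.
\]
The double integral therefore splits as
\[
\int_A\alpha(x')\overline{(\chi\circ\phi)(x')}\,d\mu(x')\cdot\int_A\beta(y)\overline{(\chi\circ(\Id-\phi))(y)}\,d\mu(y)=\Alpha(\chi\circ\phi)\,\hat\beta(\chi\circ(\Id-\phi)).
\]

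The remaining points need checking:
\begin{itemize}
\item $\chi\circ\phi$ and $\chi\circ(\Id-\phi)$ are elements of $\A$. They are continuous homomorphisms into $S^1$ because $\phi$ and $\Id-\phi$ are continuous endomorphisms; here we use that $\Id-\phi$ is a topological automorphism when $\Alex(A,\phi)$ is latin in this setting.
\item The modulus bookkeeping: $\mu(\phi(E))=\Delta_A(\phi)\mu(E)$, and translation by $(\Id-\phi)(y)$ preserves $\mu$.
\end{itemize}

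The main obstacle is justifying Fubini and keeping the normalizing factor consistent. The second formula of Proposition \ref{pro:l1} involves $\Delta_A(\Id-\phi)^{-1}$, and using it would require substituting through $L_{y'}^{-1}$ instead. I would avoid this by working only with the first formula, which makes the cancellation immediate. Latinity then enters only to ensure that $\chi\circ(\Id-\phi)$ ranges appropriately in $\A$.
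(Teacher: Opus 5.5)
Your proof is correct, but you change variables on the other side from the paper.

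\textbf{The paper's route.} It starts from the latin form of the product in Proposition \ref{pro:l1},
\[
(\alpha\beta)(x)=\Delta_A(\Id-\phi)^{-1}\int_A\alpha(y)\beta(L_y^{-1}(x))\,d\mu(y).
\]
For fixed $y$ it substitutes $z=L_y^{-1}(x)$, i.e.\ $x=\phi(y)+(\Id-\phi)(z)$, so the modulus of $\Id-\phi$ cancels the prefactor.

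\textbf{Your route.} You start from the general form and substitute $x'=S_y^{-1}(x)$, so the modulus of $\phi$ cancels instead. From there the two arguments coincide: character property, Fubini justified by the estimate of Proposition \ref{pro:l1}, and splitting into a product.

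\textbf{What your route buys.} It never uses invertibility of $\Id-\phi$. The map $\chi\circ(\Id-\phi)$ is a continuous character for any continuous endomorphism $\Id-\phi$. So your computation proves the identity for every $\sigma$-compact, locally compact Alexander quandle, latin or not. For instance, for $\phi=\Id$ it correctly gives $\widehat{\alpha\beta}(\chi)=\Alpha(\chi)\hat\beta$ evaluated at the trivial character.

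Consequently, your closing remark that latinity is needed so that $\chi\circ(\Id-\phi)$ ``ranges appropriately'' in $\A$ is unnecessary for this statement. Bijectivity of $\chi\mapsto\chi\circ(\Id-\phi)$ only matters later, e.g.\ for $\Psi(S)=S$ in Proposition \ref{prop:specm}. The paper's route, by contrast, genuinely needs the latin hypothesis, both for the second formula of Proposition \ref{pro:l1} and for $\Delta_A(\Id-\phi)$ to be defined.
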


\begin{proof}
    Fix $\chi\in\A$. Using the definition of the Fourier transform and Proposition \ref{pro:l1}, we compute
    \begin{align*}
        \widehat{\alpha\beta}(\chi)&=\int_{A}(\alpha\beta)(x)\overline{\chi(x)} d\mu(x) \\
        &=\Delta_A(\Id-\phi)^{-1} \int_{A}\int_{ A}\alpha (y)   \beta(L_{y}^{-1}(x))d\mu(y)\overline{\chi(x)} d\mu(x) \\
        &=\Delta_A(\Id-\phi)^{-1} \int_{A}\int_{ A}\alpha (y)   \beta(L_{y}^{-1}(x))d\mu(y)\overline{\chi(x)} d\mu(\phi(y)+(\Id-\phi)(L_y^{-1}(x)))\\
        &=  \int_{A}\int_{ A}\alpha (y)   \beta(L_{y}^{-1}(x))d\mu(y)\overline{\chi(\phi(y))}\overline{\chi((\Id-\phi)(L_y^{-1}(x)))} d\mu(  L_y^{-1}(x))\\
        &=  \int_{A}\int_{ A}\alpha (y)   \beta(z)d\mu(y)\overline{\chi(\phi(y))}\overline{\chi((\Id-\phi)(z))} d\mu(  z)\\
        &=\left(\int_{  A}\alpha (y)\overline {\chi(\phi(y))} d\mu(y)\right) \left(\int_{  A}\beta(z)\overline {\chi((\Id-\phi)(z))} d\mu(z)\right)\\
        &=\Alpha(\chi\circ\phi)\hat{\beta}(\chi\circ(\Id-\phi)).
    \end{align*}
  Our use of the Fubini--Tonelli theorem in the sixth line is justified by Proposition \ref{pro:l1}.
\end{proof}

\begin{cor}\label{cor:l1}
    Let $\Alex(A,\varphi)$ be a $\sigma$-compact, locally compact latin Alexander quandle, and let $\alpha \in L^1(A,\mu,\mathbb{C}) $ be idempotent. Then
\[\Alpha(\chi) = \Alpha(\chi\circ\varphi) \Alpha(\chi\circ (\Id - \varphi))\]
for all $\chi\in\A$.
\end{cor}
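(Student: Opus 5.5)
The corollary follows directly from the key identity (the unnamed theorem just above) by taking $\beta = \alpha$. Saying that $\alpha$ is idempotent means $\alpha\alpha = \alpha$ in $L^1(A,\mu,\C)$, with the product of Proposition \ref{pro:l1}. This equality holds almost everywhere, so it is an equality of elements of $L^1$. Since the Fourier transform is defined on $L^1$ classes, we get $\widehat{\alpha\alpha} = \Alpha$ as functions on $\A$.

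Next we apply the key identity with $\beta=\alpha$. The standing hypotheses are exactly those of the theorem: $\Alex(A,\phi)$ is $\sigma$-compact, locally compact and latin, and $\alpha\in L^1$. For every $\chi\in\A$ the identity gives
\[
\widehat{\alpha\alpha}(\chi)=\Alpha(\chi\circ\phi)\,\Alpha(\chi\circ(\Id-\phi)).
\]
Combining this with $\widehat{\alpha\alpha}=\Alpha$ gives the claimed relation $\Alpha(\chi)=\Alpha(\chi\circ\phi)\,\Alpha(\chi\circ(\Id-\phi))$ for all $\chi\in\A$.

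Two points need a brief check. First, $\chi\circ\phi$ and $\chi\circ(\Id-\phi)$ must again be elements of $\A$. They are continuous homomorphisms $A\to S^1$ because $\phi$ is a topological automorphism and $\Id-\phi$ is a continuous endomorphism; in the latin case $\Id-\phi$ is even bijective, as the change of variables in Proposition \ref{pro:l1} already uses. Second, we need the convention that ``idempotent'' means $\alpha\alpha=\alpha$ in the algebra structure of Proposition \ref{pro:l1}. I do not expect any real obstacle: all the analytic content, namely the Fubini justification and the modulus factors cancelling, is already contained in the key identity.
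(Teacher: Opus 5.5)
Your proposal is correct and is the same argument the paper intends: the corollary is stated without a separate proof because it is exactly the key identity with $\beta=\alpha$ combined with $\alpha\alpha=\alpha$. Your side checks, that $\chi\circ\phi,\chi\circ(\Id-\phi)\in\A$ and that an almost-everywhere equality suffices for the Fourier transforms to agree, are valid additional care.
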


\subsection{Finite Alexander quandles}
Now, let $A$ be a finite abelian group equipped with the discrete topology and the counting measure, which is a Haar measure. Then the quandle algebra $\C[\Alex(A,\phi)]$ is canonically isomorphic (as a nonassociative $\C$-algebra) to $L^1(A,\mu,\mathbb{C})$ with the product defined in Proposition \ref{pro:l1}.

We exploit this isomorphism as follows. Given an element $u\in \C[\Alex(A,\phi)]$, write $u=\sum_{x\in A}\alpha_xe_x$ and define $\alpha\colon A\to\C$ by $x\mapsto \alpha_x$. Then the Fourier transform of $\alpha$ is
\begin{equation}\label{eq:ft}
    \Alpha\colon \A\to\C,\qquad \chi\mapsto \sum_{x\in A}\alpha_x\overline{\chi(x)},
\end{equation}
and Corollary \ref{cor:l1} specializes to the following.

\begin{pro}\label{prop:alex}
    Let $\Alex(A,\phi)$ be a finite latin Alexander quandle, and let $u\in \C[\Alex(A,\phi)]$ and $\Alpha\colon \A\to\C$ be as above. If $u$ is idempotent, then
    \[
    \Alpha (\chi)=\Alpha (\chi\circ\phi)\Alpha (\chi\circ(\Id-\phi))
    \]
    for all $\chi\in\A$.
\end{pro}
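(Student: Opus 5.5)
The plan is to deduce the statement directly from Corollary \ref{cor:l1} by specializing to the discrete, counting-measure setting. Equip the finite abelian group $A$ with the discrete topology and counting measure $\mu$. This makes $A$ compact, hence $\sigma$-compact and locally compact, and $\mu$ is a Haar measure. Every automorphism of a finite group preserves counting measure, so $\Delta_A\phi^{-1}=1$. Since $\Alex(A,\phi)$ is latin, $\Id-\phi$ is a bijection, and likewise $\Delta_A(\Id-\phi)^{-1}=1$. Thus all hypotheses of Corollary \ref{cor:l1} hold.

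The one substantive step is to check that the identification $u\mapsto\alpha$, with $\alpha(x)=\alpha_x$, is multiplicative for the product of Proposition \ref{pro:l1}. This is what turns idempotency of $u$ into idempotency of $\alpha$. With counting measure the integral becomes a finite sum, and the modulus factor is $1$, so
\[
(\alpha\beta)(x)=\sum_{y\in A}\alpha\bigl(S_y^{-1}(x)\bigr)\beta(y).
\]
On the other hand, the coefficient of $e_x$ in $\bigl(\sum_w\alpha_we_w\bigr)\bigl(\sum_y\beta_ye_y\bigr)$ is $\sum_{y}\sum_{w:\,w*y=x}\alpha_w\beta_y$. Because $S_y$ is a bijection, the inner sum has the single term $w=S_y^{-1}(x)$. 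The two expressions therefore agree, and the map is an isomorphism of nonassociative algebras, as asserted before the proposition. I expect this bookkeeping to be the only place needing care, and it is routine.

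Given this, if $u^2=u$ then $\alpha\alpha=\alpha$ in $L^1(A,\mu,\C)$. Corollary \ref{cor:l1} then gives $\Alpha(\chi)=\Alpha(\chi\circ\phi)\Alpha(\chi\circ(\Id-\phi))$ for every $\chi\in\A$. The Fourier transform with respect to counting measure is exactly the finite sum \eqref{eq:ft}, so this is the claimed identity.

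For a self-contained check, one can also verify it by direct expansion without invoking the analytic machinery:
\[
\Alpha(\chi)=\sum_x\overline{\chi(x)}\sum_{w*y=x}\alpha_w\alpha_y=\sum_{w,y}\alpha_w\alpha_y\overline{\chi(\phi(w))}\,\overline{\chi((\Id-\phi)(y))}.
\]
This factors as the product of the two transforms, using only that $\chi$ is a homomorphism and $w*y=\phi(w)+(\Id-\phi)(y)$.
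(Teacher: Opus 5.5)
Your proposal is correct and is essentially the paper's argument: the paper also gives $A$ the discrete topology and counting measure, identifies $\C[\Alex(A,\phi)]$ with $L^1(A,\mu,\C)$ under the product of Proposition~\ref{pro:l1}, and reads the identity off Corollary~\ref{cor:l1}; you additionally spell out the multiplicativity check that the paper leaves implicit. Your direct finite-sum expansion is also valid, and it never uses invertibility of $\Id-\phi$, so it shows the latin hypothesis is not actually needed for this identity.
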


Here is another useful auxiliary result, which we prove using Plancherel's identity for finite abelian groups.

\begin{lem}\label{lem:parseval}
    Let $\Alex(A,\phi)$ be a finite Alexander quandle, and let $u\in \I(\Z[\Alex(A,\phi)])$ and $\Alpha\colon \A\to\C$ be as above. If $|\Alpha(\chi)|\in\{0,1\}$ for all $\chi\in\A$, then $u$ is a trivial idempotent.
\end{lem}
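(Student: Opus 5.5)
We will combine Plancherel's identity with the integrality of the coefficients $\alpha_x$. Write $n=|A|=|\A|$ and let $u=\sum_{x\in A}\alpha_xe_x\in\I(\Z[\Alex(A,\phi)])$ with $\alpha_x\in\Z$. Plancherel's identity for the finite abelian group $A$ with counting measure reads
\[
\sum_{\chi\in\A}|\Alpha(\chi)|^2 = n\sum_{x\in A}|\alpha_x|^2 .
\]
The left-hand side is governed by the hypothesis $|\Alpha(\chi)|\in\{0,1\}$, and the right-hand side is controlled by integrality.

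\emph{Bounding the left-hand side.} Since each $|\Alpha(\chi)|^2$ is $0$ or $1$, the sum is the number of characters with $\Alpha(\chi)\neq 0$, and hence is at most $n$. Plancherel therefore gives $\sum_x \alpha_x^2\le 1$.

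\emph{Using integrality.} The $\alpha_x$ are integers, so either all of them vanish, or exactly one equals $\pm1$ and the others are $0$. Idempotents are nonzero by definition, so $u=\pm e_{x_0}$ for some $x_0\in A$.

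\emph{Fixing the sign.} The augmentation satisfies $\epsilon(u)\in\{0,1\}$, as noted in Section~\ref{review}. Since $\epsilon(-e_{x_0})=-1$, this rules out $u=-e_{x_0}$; alternatively, $(-e_{x_0})^2=e_{x_0}\neq -e_{x_0}$. Hence $u=e_{x_0}$ is a trivial idempotent.

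There is no real obstacle in this argument. The only points needing care are the normalization in Plancherel's identity (the factor $n$ comes from using the unnormalized transform \eqref{eq:ft} together with counting measure) and the observation that latinness is not needed here, which is consistent with the lemma being stated for arbitrary finite Alexander quandles.
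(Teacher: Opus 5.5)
Your proposal is correct and follows essentially the same route as the paper's proof: Plancherel plus the hypothesis gives $\sum_x\alpha_x^2\le 1$, integrality and nonzeroness force $u=\pm e_{x_0}$, and the sign is fixed by idempotence. The paper phrases the middle step as forcing the number of characters with $\Alpha(\chi)\neq 0$ to equal $|A|$, and it uses only the squaring argument to rule out $-e_{x_0}$; your augmentation alternative for the sign is equally valid.
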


\begin{proof}
    We have to show that $u=e_y$ for some $y\in A$. To that end, let $n$ be the cardinality of the set $\{\chi\in \A\mid \Alpha (\chi)\neq 0\}\subseteq \A$. Combine Plancherel's identity (see \cite{fourier}*{Thm.\ 2.4.3}) and the hypothesis to obtain
    \begin{equation}\label{eq:parseval}
        \sum_{x\in A}\alpha_x^2=\frac{1}{|A|}\sum_{\chi\in \A}|\Alpha (\chi)|^2=\frac{n}{|A|}.
    \end{equation}
    Recall that $A\cong\A$ because $A$ is finite (see, for example, Corollary 2.3.4 in \emph{op.\ cit.}); in particular, $n\leq |A|$. 
    But the left-hand side of \eqref{eq:parseval} is a positive integer, so we must have $n=|A|$. Therefore, since the $\alpha_x$'s are integers, there exists some $y\in A$ such that $\alpha_y=\pm 1$ and $\alpha_x=0$ for all $x\neq y$.
    If $\alpha_y=-1$, then the idempotence of $u$ yields
    \[
    -e_y=u=u^2=(-e_y)^2=e_y,
    \]
    which is absurd. Hence, $\alpha_y=1$, so $u=e_y$.
\end{proof}

\begin{rmk}
    More generally, the statement of Lemma \ref{lem:parseval} still holds if we replace $\Z$ with the ring of integers $\mathcal O_K$ of a totally real number field $K$; the proof is the same. Indeed, one can use the field trace and the AM-GM inequality to show that if $\sum^n_{i=1}\alpha^2_i=1$ in $\mathcal O_K$, then all but one of the $\alpha_i$'s vanish, and the remaining one equals $\pm 1$.
\end{rmk}

\section{Takasaki quandles}\label{sec:takasaki}

In this section, we prove Conjecture \ref{Main conjecture} for all semi-latin Takasaki quandles.

Let $A$ be an additive abelian group. Recall that the Takasaki quandle of $A$ is defined to be $T(A)=\Alex(A,-\Id)$.  
Evidently, $T(A)$ is semi-latin if and only if $A$ has no 2-torsion. In particular, if $A$ is finitely generated, then $T(A)$ is semi-latin if and only if the torsion subgroup $T\leq A$ has odd order. 

We first consider the case in which $A$ is finite. 

\begin{pro}\label{pro:finite-takasaki}
    Finite latin Takasaki quandles satisfy Conjecture \ref{Main conjecture}.
\end{pro}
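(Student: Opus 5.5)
We argue with Proposition \ref{prop:alex} and Lemma \ref{lem:parseval}. Let $A$ be a finite abelian group of odd order, so that $T(A)=\Alex(A,-\Id)$ is latin. Let $u=\sum_x\alpha_xe_x\in\I(\Z[T(A)])$ be a nonzero idempotent. By Lemma \ref{lem:parseval}, it suffices to show that $|\Alpha(\chi)|\in\{0,1\}$ for every $\chi\in\A$.

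With $\phi=-\Id$ we have $\Id-\phi=2\cdot\Id$, and Proposition \ref{prop:alex} reads
\[
\Alpha(\chi)=\Alpha(\chi^{-1})\,\Alpha(\chi^{2})\qquad(\chi\in\A),
\]
writing $\A$ multiplicatively. Since the $\alpha_x$ are integers, $\Alpha(\chi^{-1})=\overline{\Alpha(\chi)}$. The identity therefore becomes $\Alpha(\chi)=\overline{\Alpha(\chi)}\,\Alpha(\chi^2)$. Taking absolute values, whenever $\Alpha(\chi)\neq0$ we get $|\Alpha(\chi^2)|=1$.

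Next we transport this back to $\chi$. Since $|A|$ is odd, the squaring map is an automorphism of $\A$, which has odd order $m$. Then $\chi=(\chi^2)^{(m+1)/2}$, so every character is a square. Moreover, the set $Z=\{\chi:\Alpha(\chi)\neq0\}$ satisfies $\Alpha(\chi)\ne0\Rightarrow\Alpha(\chi^2)\neq0$, so $Z$ is closed under squaring. Because $Z$ is finite and squaring is injective, squaring permutes $Z$. Hence each $\chi\in Z$ has the form $\psi^2$ with $\psi\in Z$, and the previous paragraph gives $|\Alpha(\chi)|=|\Alpha(\psi^2)|=1$. Characters outside $Z$ have $\Alpha=0$. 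So $|\Alpha(\chi)|\in\{0,1\}$ for all $\chi$, and Lemma \ref{lem:parseval} shows that $u$ is trivial.

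The main point to watch is this closure argument. It needs $\Alpha(\chi^2)\ne0$ to follow from $\Alpha(\chi)\ne0$, which is immediate from the identity, together with the fact that injective squaring on the finite set $Z$ is surjective. Also, the trivial character gives $\Alpha(1)=\varepsilon(u)\in\{0,1\}$, which is consistent with the conclusion.
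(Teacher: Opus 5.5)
Your proof is correct, and its skeleton is the paper's. You reduce to $|\Alpha(\chi)|\in\{0,1\}$ via Lemma \ref{lem:parseval}, rewrite Proposition \ref{prop:alex} as $\Alpha(\psi)=\overline{\Alpha(\psi)}\,\Alpha(\psi^2)$, and apply this at the unique square root $\psi=\chi^{k}$, $k=(|A|+1)/2$, of $\chi$.

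The one step where you genuinely differ is showing that $\Alpha(\chi)\neq 0$ forces $\Alpha(\chi^{k})\neq 0$. The paper gets this from Galois theory: $\Alpha(\chi^k)$ is the image of $\Alpha(\chi)$ under the automorphism $\zeta_{|A|}\mapsto\zeta_{|A|}^{k}$ of $\mathbb{Q}(\zeta_{|A|})$, which fixes the integer coefficients. You instead read off from the identity that the support $Z$ is forward-invariant under the injective squaring map, so squaring permutes the finite set $Z$. This is the same pigeonhole device the paper itself uses later for $S$ in Proposition \ref{prop:specm}.

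Your route is more elementary and uses only that the coefficients are real. So it shows $|\Alpha|\in\{0,1\}$ for every idempotent of $\R[T(A)]$. Together with the remark after Lemma \ref{lem:parseval}, it also runs verbatim over $\mathcal O_K$ for totally real $K$. There, the paper's Galois step would need an automorphism sending $\zeta_{|A|}\mapsto\zeta_{|A|}^{k}$ while fixing the coefficients, which can fail. In exchange, the paper's route rests on a fact independent of idempotence: for any $u\in\Z[A]$, the zero set of $\Alpha$ is stable under every twist $\chi\mapsto\chi^{j}$ with $\gcd(j,|A|)=1$.
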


\begin{proof}
    Let $A$ be a finite abelian group of odd order, and let $u\in\I(\Z[T(A)])$ be a nonzero idempotent. We have to show that $u=e_y$ for some $y\in A$. Let $\A$ be the Pontryagin dual of $A$. Given $\psi\in\A$ and $k\in\Z$, denote by $\psi^k\in\A$ the homomorphism $x\mapsto \psi(x)^k=\psi(k\cdot x)$.
    Since $T(A)=\Alex(A,-\Id)$, Proposition \ref{prop:alex} states that 
    \begin{equation}\label{eq:conjugate}
        \Alpha (\psi)=\Alpha (\psi^{-1})\Alpha (\psi^2)=\overline{\Alpha (\psi)}\Alpha (\psi^2)
    \end{equation}
    for all $\psi\in\A$,
    where in the second equality we used the fact that $\alpha_x\in\Z\subset\R$ for all $x\in A$. 
    
    By Lemma \ref{lem:parseval}, it suffices to show that $|\Alpha (\chi)|\in\{0,1\}$ for all $\chi\in \A$. To that end, consider the integer $k\coloneq (|A|+1)/2$. The algebraic integers $\Alpha (\chi)$ and $\Alpha (\chi^k)$ are Galois conjugates, so the former is nonzero if and only if the latter is nonzero. On the other hand, applying \eqref{eq:conjugate} with $\psi\coloneq \chi^k$ yields
    \[
    \Alpha (\chi^k)=\overline{\Alpha (\chi^k)}\Alpha (\chi^{2k})=\overline{\Alpha (\chi^k)}\Alpha (\chi).
    \]
    Thus, if $\Alpha (\chi)\neq 0$, then we must have $|\Alpha (\chi)|=1$, as desired.
\end{proof}

To reduce the general case to Proposition \ref{pro:finite-takasaki}, we use the following technical lemma.

\begin{lem}\label{lem:takasaki}
    Let $G$ be a finitely generated abelian group, and let $S\subseteq G$ be a finite subset. Then $G$ has a finite quotient $\pi\colon G\twoheadrightarrow H$ such that the restriction $\pi|_S$ is injective. Moreover, if $G$ has no 2-torsion, then we can choose for the order of $H$ to be odd.
\end{lem}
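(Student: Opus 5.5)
By the structure theorem, write $G\cong\Z^r\oplus T$ with $T$ finite. The plan is to reduce from separating the points of $S$ to separating finitely many nonzero elements from $0$.

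Consider the finite set $D\coloneq\{s-s'\mid s,s'\in S,\ s\neq s'\}$ of nonzero elements. A homomorphism $\pi$ is injective on $S$ exactly when $\pi(d)\neq 0$ for every $d\in D$. So it suffices to find a finite-index subgroup $N\le G$ with $N\cap D=\emptyset$, and then set $H\coloneq G/N$.

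For the first claim I take $N\coloneq m\Z^r\oplus 0$. Write each $d\in D$ as $d=(v_d,t_d)$ with $v_d\in\Z^r$ and $t_d\in T$.
\begin{itemize}
\item If $v_d\neq 0$, choose $m$ larger than every absolute value of a coordinate of $v_d$. Then $v_d\notin m\Z^r$, so $d\notin N$.
\item If $v_d=0$, then $t_d\neq 0$, and $d\notin N$ automatically because $N$ meets $T$ only in $0$.
\end{itemize}
Hence $H=(\Z/m)^r\oplus T$ is finite and $\pi|_S$ is injective.

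For the odd-order refinement, suppose $G$ has no $2$-torsion. Then $T$ has odd order, since any finite abelian group of even order contains an element of order $2$ by Cauchy's theorem. It remains to make $m$ odd. The same argument works with any $m$ exceeding all coordinates, so I simply pick an odd such $m$, for instance $2M+1$ where $M$ bounds the absolute values of all coordinates of all $v_d$. Then $|H|=m^r|T|$ is odd.

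I expect no real obstacle here. The only point needing care is the reduction to differences, which is routine. The use of Cauchy's theorem to see that $T$ has odd order is the one place where the no-$2$-torsion hypothesis enters.
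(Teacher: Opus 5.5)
Your proposal is correct and follows essentially the same route as the paper: write $G\cong\Z^r\oplus T$, quotient by $m\Z^r\oplus 0$ with $m$ odd and larger than all coordinate differences of elements of $S$, and use Cauchy's theorem to see that $|T|$ is odd. Your reduction to the difference set $D$ is only a repackaging of the paper's coordinatewise comparison, and it has the small advantage of spelling out why the torsion components of two elements of $S$ must also agree.
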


\begin{proof}
    By the structure theorem for finitely generated abelian groups, we can assume that $G=\mathbb{Z}^r\oplus T$ for some $r\geq 0$ and some finite abelian group $T$. By Cauchy's theorem, $|T|$ is odd if $G$ has no 2-torsion. 
    
    Since $S$ is finite, we can pick an odd integer $n$ such that\[n> \max_{1\leq i\leq r}\{|v_i-w_i|: \mathbf{v},\mathbf{w}\in S\}.\] 
    Let $H\coloneq (\mathbb{Z}/n\mathbb{Z})^r\oplus T$, and consider $H$ as a finite quotient group $\pi\colon G\twoheadrightarrow H$. We claim that $\pi|_S$ is injective. Indeed, suppose that $\pi(\mathbf{v})=\pi(\mathbf{w})$ for some $\mathbf{v},\mathbf{w}\in S$. Then, for all $1\leq i\leq r$, the nonnegative integer $|v_i-w_i|$ is divisible by $n$ and strictly less than $n$, so it must equal $0$. Hence, $\mathbf{v}=\mathbf{w}$.
\end{proof}

Given a quandle $X$ and an element $u\in \Z[X]$, say $u=\sum_{x\in X}\alpha_xe_x$, we define the \emph{support} of $u$ to be the finite subset
\[
\supp(u)\coloneq\{x\in X\mid \alpha_x\neq 0\}\subseteq X.
\]

\begin{thm}\label{thm:takasaki}
    All semi-latin Takasaki quandles satisfy Conjecture \ref{Main conjecture}.
\end{thm}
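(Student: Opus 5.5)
The plan is to reduce to the finite case, Proposition \ref{pro:finite-takasaki}, using Lemma \ref{lem:takasaki}. Let $A$ be an abelian group with no 2-torsion, so that $T(A)$ is semi-latin. Let $u=\sum_{x\in A}\alpha_xe_x\in\I(\Z[T(A)])$ be a nonzero idempotent. We must show $u=e_y$ for some $y$.

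First, form a finite set $S\subseteq A$ that contains $\supp(u)$ and all products $x*y=2y-x$ with $x,y\in\supp(u)$. Let $G\le A$ be the subgroup generated by $S$. Then $G$ is finitely generated and has no 2-torsion, being a subgroup of $A$. Also $T(G)$ is a subquandle of $T(A)$, so $u\in\Z[T(G)]$ and $u$ is still idempotent there.

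Next, apply Lemma \ref{lem:takasaki} to $G$ and $S$. This gives a finite quotient $\pi\colon G\twoheadrightarrow H$ with $|H|$ odd and $\pi|_S$ injective. Since $\pi$ is a group homomorphism, it is a quandle homomorphism $T(G)\to T(H)$. By functoriality it induces a ring homomorphism $\pi_*\colon\Z[T(G)]\to\Z[T(H)]$, and $\pi_*(u)$ is an idempotent. Because $\pi|_{\supp(u)}$ is injective, no cancellation occurs, so $\pi_*(u)=\sum_{x\in\supp(u)}\alpha_xe_{\pi(x)}$ is nonzero and its support is $\pi(\supp(u))$.

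Since $|H|$ is odd, $T(H)$ is a finite latin Takasaki quandle: $\Id-\phi=2\cdot\Id$ is invertible. Proposition \ref{pro:finite-takasaki} then gives $\pi_*(u)=e_{h}$ for some $h\in H$. By injectivity of $\pi$ on $\supp(u)$, we get $\supp(u)=\{y\}$ with $\alpha_y=1$, that is, $u=e_y$.

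The main point needing care is the injectivity and no-cancellation step. Including the products $2y-x$ in $S$ is actually unnecessary, since only the support of $u$ itself must map injectively; idempotency of $\pi_*(u)$ is automatic from $\pi_*$ being a ring homomorphism. So taking $S=\supp(u)$ suffices. Otherwise the argument is routine, given Lemma \ref{lem:takasaki} and the fact that finitely generated subgroups inherit the absence of 2-torsion.
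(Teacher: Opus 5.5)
Your proposal is correct and follows the paper's proof essentially step for step. You pass to $G=\langle\supp(u)\rangle$ and use Lemma \ref{lem:takasaki} to get an odd-order finite quotient that is injective on the support. You then push $u$ forward to a nonzero idempotent of $\Z[T(H)]$, apply Proposition \ref{pro:finite-takasaki}, and pull back using injectivity on the support. As you note yourself, adding the products $2y-x$ to $S$ does no harm but is unnecessary.
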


\begin{proof}
    Let $A$ be an abelian group with no 2-torsion, and let $u\in\I(\Z[T(A)])$ be a nonzero idempotent. We have to show that $u$ is trivial. The subgroup $G\coloneq\langle\supp (u)\rangle\leq A$ is a finitely generated abelian group with no 2-torsion, so Lemma \ref{lem:takasaki} states that $G$ has a finite quotient $\pi\colon G\twoheadrightarrow H$ of odd order such that $\pi|_{\supp (u)}$ is injective. Note that $u\in \I(\Z[T(G)])$.

    View $\pi$ as a quandle epimorphism $\pi\colon T(G)\twoheadrightarrow T(H)$, and consider the induced ring epimorphism $\widetilde{\pi}\colon\Z[T(G)]\twoheadrightarrow\Z[T(H)]$. Since $\pi|_{\supp (u)}$ is injective, the restriction of $\widetilde{\pi}$ to the $\Z$-submodule \[U\coloneq \operatorname{span}_\Z(\supp(u))\leq \Z[T(G)]\] is also injective. Since $u$ is a nonzero idempotent contained in $U$, its image $\widetilde{\pi}(u)$ is a nonzero idempotent in $\Z[T(H)]$. But $T(H)$ is finite and latin, so Proposition \ref{pro:finite-takasaki} states that $\widetilde{\pi}(u)$ is trivial. Since $\widetilde{\pi}|_U$ is injective and induced by the canonical projection $\pi$, this implies that $u$ is also trivial.
\end{proof}

\begin{cor}\label{cor:aut-tak}
    If $A$ is an abelian group with no 2-torsion, then $\Aut(\Z[T(A)])\cong A\rtimes \Aut(A)$.
\end{cor}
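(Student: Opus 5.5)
We need to show $\Aut(\Z[T(A)])\cong A\rtimes\Aut(A)$, where $\Aut$ means ring automorphisms of the nonassociative ring $\Z[T(A)]$. The idea is that Theorem \ref{thm:takasaki} lets us identify $\Aut(\Z[T(A)])$ with the quandle automorphism group $\Aut(T(A))$. We then compute $\Aut(T(A))$ directly.

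\textbf{Step 1: automorphisms of the ring permute the basis.} Any ring automorphism $f$ sends nonzero idempotents to nonzero idempotents. By Theorem \ref{thm:takasaki}, the nonzero idempotents of $\Z[T(A)]$ are exactly the $e_x$ for $x\in A$. So $f$ restricts to a map $g\colon A\to A$ with $f(e_x)=e_{g(x)}$. Since $f^{-1}$ has the same property, $g$ is a bijection. Multiplicativity gives $e_{g(x*y)}=f(e_xe_y)=e_{g(x)}e_{g(y)}=e_{g(x)*g(y)}$, so $g\in\Aut(T(A))$. Conversely, every quandle automorphism extends $\Z$-linearly to a ring automorphism, by functoriality. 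Since $\{e_x\}$ is a $\Z$-basis, $f$ is determined by $g$. Hence the restriction map $\Aut(\Z[T(A)])\to\Aut(T(A))$ is a group isomorphism.

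\textbf{Step 2: compute $\Aut(T(A))$.} Translations $\tau_a(x)=x+a$ and group automorphisms $\psi\in\Aut(A)$ are quandle automorphisms of $T(A)$, since $x*y=2y-x$ is an affine expression whose coefficients sum to $1$. Now take any $g\in\Aut(T(A))$ and set $h\coloneq\tau_{-g(0)}\circ g$. Then $h$ is a quandle automorphism with $h(0)=0$, so it suffices to show that $h$ is additive.

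From $h(2y-x)=2h(y)-h(x)$ with $x=0$ we get $h(2y)=2h(y)$. With $y=0$ we get $h(-x)=-h(x)$. Then
\[
h(2y-x)=2h(y)-h(x)=h(2y)+h(-x).
\]
Hence $h(a+b)=h(a)+h(b)$ whenever $a\in 2A$.

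\textbf{Main obstacle: extending additivity beyond $2A$.} I only know additivity when one summand lies in $2A$, and $2A$ need not be all of $A$. For example, if $A=\Z$, then $2A\neq A$. I would first try an argument using a coset decomposition of $A/2A$. But it may genuinely fail: for $A=\Z$, the map $x\mapsto x$ on even $x$ and $x\mapsto x+2$ on odd $x$ looks like a quandle automorphism of $T(\Z)$, and it is not affine.

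So I would check whether the corollary intends $A$ to be $2$-divisible, or instead intends the quandle automorphism group, perhaps with a precise definition given later. The intended proof is certainly Step 1 combined with the known description of $\Aut(T(A))$. Assuming $\Aut(T(A))=\{\tau_a\circ\psi\}$, the group structure follows from $\psi\tau_a\psi^{-1}=\tau_{\psi(a)}$. The translations form a normal subgroup isomorphic to $A$, it meets $\Aut(A)$ trivially, and so we obtain $A\rtimes\Aut(A)$.
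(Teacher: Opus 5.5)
Your overall strategy matches the paper's proof. Step 1 is correct and is exactly the identification $\Aut(\Z[T(A)])\cong\Aut(T(A))$ that the paper imports from Bardakov--Elhamdadi (Prop.~2.4). For $\Aut(T(A))\cong A\rtimes\Aut(A)$ the paper cites Theorem~4.2 of the 2017 reference. The gap is in Step 2, where you set out to prove this second ingredient and do not finish. You never use the no-2-torsion hypothesis there, you stop at additivity with one summand in $2A$, and you then cast doubt on a true statement.

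Your proposed counterexample is not a quandle homomorphism. For $g$ on $T(\Z)$ with $g(x)=x$ on evens and $g(x)=x+2$ on odds, we have $g(0*1)=g(2)=2$, whereas $g(0)*g(1)=0*3=6$. In fact, on $\Z$ the identity $n+1=2n-(n-1)$ gives $h(n+1)=2h(n)-h(n-1)$, which together with $h(0)=0$ forces $h(n)=n\,h(1)$. So no $2$-divisibility assumption is needed. As written, though, your proposal only "assumes" the description of $\Aut(T(A))$ and does not prove it.

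The missing step is a one-line cancellation. For $x,d\in A$, apply the quandle relation with $y=x+d$, and then use what you already proved (additivity when one summand lies in $2A$, and $h(2d)=2h(d)$):
\[
2h(x+d)-h(x)=h\bigl(2(x+d)-x\bigr)=h(x+2d)=h(2d)+h(x)=2h(d)+h(x).
\]
Hence $2\bigl(h(x+d)-h(x)-h(d)\bigr)=0$. Since $A$ has no $2$-torsion, $h$ is additive, and being bijective, $h\in\Aut(A)$. Therefore every $g\in\Aut(T(A))$ equals $\tau_{g(0)}\circ h$ with $h\in\Aut(A)$. Your final paragraph (normality of translations, $\psi\tau_a\psi^{-1}=\tau_{\psi(a)}$, trivial intersection) then gives $A\rtimes\Aut(A)$. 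The no-$2$-torsion hypothesis is thus used twice: once to make $T(A)$ semi-latin so that Theorem~\ref{thm:takasaki} applies in Step 1, and once in this cancellation.
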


\begin{proof}
    Combine Theorem \ref{thm:takasaki} with \cite{bardakov-elhamdadi-2026}*{Prop.\ 2.4} and \cite{bardakov-2017}*{Thm.\ 4.2}.
\end{proof}

See \cite{ta-2} for applications of Corollary \ref{cor:aut-tak}.

\section{Medial commutative quandles}\label{sec:medial-comm}

Recall that commutative quandles are latin. Idempotents of quandle algebras of certain commutative quandles over $\Z$ and various other rings were previously studied in \citelist{\cite{bardakov-elhamdadi-2026}\cite{ta-2}}. In this section, we prove Conjecture \ref{Main conjecture} for all medial commutative quandles. 

In the following, let $\mathbb{D}\coloneq\Z[1/2]$ denote the ring of dyadic rational numbers. In Corollary 4.2 of \cite{ta-2026}, the second author proved that every nonempty medial commutative quandle is isomorphic to a \emph{midpoint quandle} $A\Mid$, that is, an Alexander quandle $A\Mid\coloneq \Alex(A,\phi)$ such that $A$ is a $\mathbb{D}$-module and $\phi$ is multiplication by $1/2$. Conversely, every midpoint quandle is medial and commutative. 

The structure of the proof is similar to that of Theorem \ref{thm:takasaki}; once again, we begin by using Fourier analysis to prove the finite case.

\begin{pro}\label{pro:finite-comm}
    Finite medial commutative quandles satisfy Conjecture \ref{Main conjecture}. 
\end{pro}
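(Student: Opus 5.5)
By Corollary 4.2 of \cite{ta-2026}, a finite medial commutative quandle is isomorphic to a midpoint quandle $A\Mid=\Alex(A,\phi)$. Here $A$ is a finite $\mathbb{D}$-module, i.e.\ a finite abelian group of odd order, and $\phi$ is multiplication by $1/2$. The strategy is the one from Proposition \ref{pro:finite-takasaki}: take a nonzero idempotent $u\in\I(\Z[A\Mid])$, show that $|\Alpha(\chi)|\in\{0,1\}$ for every $\chi\in\A$, and then conclude with Lemma \ref{lem:parseval}. Since $A\Mid$ is latin (commutative quandles are latin, and $\Id-\phi=\phi$ is invertible), Proposition \ref{prop:alex} applies.

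First I make the identity of Proposition \ref{prop:alex} explicit. Put $k\coloneq(|A|+1)/2$. Since $|A|$ is odd, $2k\equiv 1\pmod{|A|}$, so multiplication by $1/2$ on $A$ is multiplication by the integer $k$. Hence $\phi=\Id-\phi=k\cdot\Id$, and for every $\chi\in\A$ we get $\chi\circ\phi=\chi\circ(\Id-\phi)=\chi^k$. Proposition \ref{prop:alex} then reads
\[
\Alpha(\chi)=\Alpha(\chi^k)^2\qquad\text{for all }\chi\in\A .
\]

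Next I iterate this identity. Applying it successively to $\chi^k,\chi^{k^2},\dots$ gives
\[
\Alpha(\chi)=\Alpha(\chi^{k^m})^{2^m}\qquad\text{for all } m\geq 0 .
\]
Because $k$ is a unit modulo $|A|$, there is an $m\geq 1$ with $k^m\equiv 1\pmod{|A|}$, and then $\chi^{k^m}=\chi$. Therefore $\Alpha(\chi)=\Alpha(\chi)^{2^m}$, so $\Alpha(\chi)$ is either $0$ or a $(2^m-1)$-th root of unity. In both cases $|\Alpha(\chi)|\in\{0,1\}$, and Lemma \ref{lem:parseval} shows that $u$ is a trivial idempotent.

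I expect the only delicate point to be the bookkeeping that identifies $\chi\circ\phi$ with $\chi^k$ for an integer $k$ that is invertible modulo the exponent of $A$; this is what makes the orbit $\chi,\chi^k,\chi^{k^2},\dots$ return to $\chi$. Unlike the Takasaki case, this argument needs no Galois-conjugacy input, because both arguments on the right-hand side are the same character $\chi^k$. Finally, the statement concerns $\Z[X]$ for a finite latin quandle, which is exactly the setting of Lemma \ref{lem:parseval}.
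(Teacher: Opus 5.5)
Your proof is correct and follows the paper's own argument. The paper writes $\Phi(\chi)=\chi\circ\phi$, derives $\Alpha(\Phi(\chi))^2=\Alpha(\chi)$, and uses finiteness of the $\Phi$-orbit to get $\Alpha(\chi)^{2^m}=\Alpha(\chi)$ before invoking Lemma~\ref{lem:parseval}; you do the same with $\Phi$ made explicit as $\chi\mapsto\chi^k$, $k=(|A|+1)/2$. The paper also notes separately that the empty quandle satisfies the conjecture trivially, since Corollary 4.2 of \cite{ta-2026} covers only nonempty quandles.
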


\begin{proof}
    Certainly, Conjecture \ref{Main conjecture} holds for the empty quandle. That aside, let $A$ be a finite abelian group of odd order, and let $u\in\I(\Z[A\Mid])$ be a nonzero idempotent. We have to show that $u$ is trivial. As before, denote by $\phi\in\Aut(A)$ the automorphism $x\mapsto x/2$, so that $A\Mid=\Alex(A,\phi)$. Let $\A$ be the Pontryagin dual of $A$, and define
    \[
    \Phi\colon \A\to\A,\qquad \chi\mapsto\chi\circ\phi.
    \]
    Then $\Phi$ is a permutation (even an automorphism) of $\A$, and Proposition \ref{prop:alex} states that
    \begin{equation}\label{eq:recursion}
        \Alpha(\Phi(\chi))^2=\Alpha(\chi)
    \end{equation}
    for all $\chi\in\A$.

    By Lemma \ref{lem:parseval}, it suffices to show that $|\Alpha (\chi)|\in\{0,1\}$ for all $\chi\in \A$. Since $\A$ is finite, the orbit of each $\chi\in\A$ under the action of $\Phi$ is finite, say $\{\chi,\dots,\Phi^{m-1}(\chi)\}$ with $\Phi^m(\chi)=\chi$. In particular,
    \[
    \Alpha(\chi)^{2^m}=\Alpha(\Phi^{m}(\chi))^{2^m}=\Alpha(\chi),
    \]
    where the second equality follows after applying \eqref{eq:recursion} $m$ times. 
    This implies our claim.
\end{proof}

To reduce the general case to Proposition \ref{pro:finite-comm}, we use the following technical lemma. The lemma may be viewed as an analogue of Lemma \ref{lem:takasaki} for $\mathbb{D}$-modules instead of $\Z$-modules.

\begin{lem}\label{lem:comm}
    Let $M$ be a finitely generated $\mathbb{D}$-module, and let $S\subseteq M$ be a finite subset. Then there exists a $\mathbb{D}$-module epimorphism $\pi\colon M\twoheadrightarrow N$ such that $\pi|_S$ is injective and $|N|<\infty$.
\end{lem}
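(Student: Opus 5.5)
We will use the structure theory of finitely generated modules over the principal ideal domain $\mathbb{D}=\Z[1/2]$ and then mimic the proof of Lemma \ref{lem:takasaki}. Since $\mathbb{D}$ is a localization of $\Z$, it is a PID. Its nonzero prime ideals are $p\mathbb{D}$ for odd primes $p$, and $\mathbb{D}/p^k\mathbb{D}\cong\Z/p^k\Z$. So we may assume
\[
M=\mathbb{D}^r\oplus T,\qquad T\cong\bigoplus_j \mathbb{D}/p_j^{k_j}\mathbb{D},
\]
with every $p_j$ odd. In particular $T$ is a finite abelian group of odd order.

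Next we choose the quotient. Take an odd integer $n$, to be fixed below, and set
\[
N\coloneq(\mathbb{D}/n\mathbb{D})^r\oplus T,
\]
with $\pi$ the identity on $T$ and reduction modulo $n\mathbb{D}$ on each free coordinate. This $\pi$ is a $\mathbb{D}$-module epimorphism. Since $n$ is odd, $\mathbb{D}/n\mathbb{D}\cong\Z/n\Z$, so $N$ is finite (and of odd order, which is what the later application to $A\Mid$ needs).

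To get injectivity on $S$, it suffices to handle differences in each free coordinate. Write $\mathbf v,\mathbf w\in S$ with coordinates $v_i,w_i\in\mathbb{D}$. Choose a power $2^a$ such that $2^a(v_i-w_i)\in\Z$ for all pairs and all $i$, and pick an odd $n>\max|2^a(v_i-w_i)|$. Suppose $\pi(\mathbf v)=\pi(\mathbf w)$. The $T$-components then agree, and $v_i-w_i\in n\mathbb{D}$ for each $i$. Hence $2^a(v_i-w_i)=n d$ for some $d\in\mathbb{D}$. The left side is an integer, $d=2^a(v_i-w_i)/n$, and $n$ is odd, so $n$ divides the integer $2^a(v_i-w_i)$ in $\Z$. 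Indeed $nd\in\Z$ with $d\in\Z[1/2]$ and $n$ odd forces $d\in\Z$. Because $|2^a(v_i-w_i)|<n$, we conclude $v_i=w_i$.

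The only mildly delicate step is this divisibility: $n\mathbb{D}\cap\Z=n\Z$ for odd $n$, i.e.\ clearing powers of $2$ does not interact with an odd modulus. I expect nothing else to be an obstacle beyond correctly citing the structure theorem over the PID $\mathbb{D}$.
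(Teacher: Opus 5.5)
Your proof is correct and follows essentially the same route as the paper's. You both reduce to $\mathbb{D}^r\oplus T$ via the structure theorem over the PID $\mathbb{D}$, project onto $(\mathbb{D}/n\mathbb{D})^r\oplus T$ for a suitable odd $n$, and clear powers of $2$ using $n\mathbb{D}\cap\Z=n\Z$ for odd $n$. The only cosmetic difference is in choosing $n$: the paper takes an odd $n$ that does not divide the gcd of the entries of each cleared difference vector $2^k(\mathbf d_i-\mathbf d_j)$, whereas you take $n$ larger than every cleared coordinate difference, exactly as in Lemma~\ref{lem:takasaki}.
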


\begin{proof}
    Note that $\mathbb{D}$ is a PID, and quotients of $\mathbb{D}$ by nonzero ideals are finite. Therefore, by the structure theorem for finitely generated modules over PIDs, we can assume that $M=\mathbb{D}^r\oplus T$ for some $r\geq 0$ and some $\mathbb{D}$-module $T$ with $|T|<\infty$.
    Write $S=\{(\mathbf{d}_i, t_i)\mid 1\leq i\leq |S|\}$ with $\mathbf{d}_i\in\mathbb{D}^r$ and $t_i\in T$ for all $1\leq i\leq |S|$. Define the sets \[X\coloneq\{(i,j)\in\mathbb{N}^2\mid 1 \leq i<j\leq |S|\text{ and }t_i=t_j\}\subset\mathbb{N}^2\]
    and
    \[Y\coloneq\{\mathbf{y}_{ij}\coloneq \mathbf{d}_i-\mathbf{d}_j\mid (i,j)\in X\}\subset\mathbb{D}^r.\] Note that $X$ and $Y$ are finite because $S$ is finite. Moreover, $\mathbf{0}\notin Y$. 
    
    It suffices to find an odd integer $n\in\mathbb{N}$ such that $\mathbf{y}_{ij}\not\equiv \mathbf{0}\pmod{n\mathbb{D}^r}$ for all $\mathbf{y}_{ij}\in Y$, since then we can take $\pi$ to be the canonical projection onto the quotient module $N\coloneq (\mathbb{D}/n\mathbb{D})^r\oplus T$. 
    Since $X$ is finite, we can pick $k\in\mathbb{N}$ large enough that $Y\subset 2^{-k}\mathbb{Z}^r$. Consider the integer vectors \[\mathbf{z}_{ij}\coloneq 2^k \mathbf{y}_{ij}\in \mathbb{Z}^r\setminus\{\mathbf{0}\}.\] For each such vector $\mathbf{z}_{ij}$, let $n_{ij}\in\mathbb{N}$ be the greatest common divisor of the entries of $\mathbf{z}_{ij}$. Since $Y$ is finite, there exists an odd integer $n$ such that $n\nmid n_{ij}$ for all $(i,j)\in X$. That is, $\mathbf{z}_{ij}\not\equiv \mathbf{0}\pmod{n\mathbb{Z}^r}$ for all $(i,j)\in X$. Since $n$ is odd, this is equivalent to the statement that $\mathbf{y}_{ij}\not\equiv \mathbf{0}\pmod{n\mathbb{D}^r}$ for all $\mathbf{y}_{ij}\in Y$, which is what we wanted to show.
\end{proof}

\begin{rmk}
    Lemma \ref{lem:comm} and its proof easily generalize to $\Z[1/n]$-modules with $n\geq 2$. (The $n=1$ case is already given by Lemma \ref{lem:takasaki}.)
\end{rmk}

The following is the main theorem of this section. 

\begin{thm}\label{thm:comm}
    All medial commutative quandles satisfy Conjecture \ref{Main conjecture}. 
\end{thm}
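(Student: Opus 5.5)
The plan mirrors the proof of Theorem \ref{thm:takasaki}, with Lemma \ref{lem:comm} in place of Lemma \ref{lem:takasaki} and Proposition \ref{pro:finite-comm} in place of Proposition \ref{pro:finite-takasaki}.

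\textbf{Reduction to a finitely generated submodule.} Let $X$ be a medial commutative quandle. The empty case is trivial. Otherwise, by \cite{ta-2026}*{Cor.\ 4.2}, we may assume $X=A\Mid$ for some $\mathbb{D}$-module $A$. Let $u\in\I(\Z[A\Mid])$ be a nonzero idempotent, and let $M\coloneq\mathbb{D}\langle\supp(u)\rangle\leq A$ be the $\mathbb{D}$-submodule generated by the finite set $\supp(u)$. This $M$ is a finitely generated $\mathbb{D}$-module. It is also a subquandle of $A\Mid$, since it is closed under $x*y=x/2+y/2$, and in fact $M\Mid$ is that subquandle. Hence $u\in\I(\Z[M\Mid])$.

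\textbf{Passage to a finite quotient.} Apply Lemma \ref{lem:comm} with $S=\supp(u)$ to obtain a $\mathbb{D}$-module epimorphism $\pi\colon M\twoheadrightarrow N$ with $|N|<\infty$ and $\pi|_S$ injective. Because $\pi$ is $\mathbb{D}$-linear, it commutes with multiplication by $1/2$. It is therefore a quandle epimorphism $M\Mid\twoheadrightarrow N\Mid$, and it induces a ring epimorphism $\widetilde\pi\colon\Z[M\Mid]\twoheadrightarrow\Z[N\Mid]$. Since $\pi|_S$ is injective, $\widetilde\pi$ is injective on $U\coloneq\operatorname{span}_\Z(S)$. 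As $u\in U$ and $u\neq 0$, the image $\widetilde\pi(u)$ is a nonzero idempotent of $\Z[N\Mid]$.

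\textbf{Applying the finite case.} The quandle $N\Mid$ is a finite midpoint quandle, hence medial and commutative. (Since $N$ is a finite $\mathbb{D}$-module, $2$ acts invertibly, so $|N|$ is odd, which matches the setup in Proposition \ref{pro:finite-comm}.) Proposition \ref{pro:finite-comm} then gives $\widetilde\pi(u)=e_{\pi(y)}$ for some $y\in M$. Now $\widetilde\pi(u)=\sum_{x\in S}\alpha_x e_{\pi(x)}$ with the $\pi(x)$ pairwise distinct. It follows that $S=\{y\}$ and $\alpha_y=1$, that is, $u=e_y$.

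\textbf{Expected obstacle.} The main point to check is that $\pi$ really respects the quandle structure. This is exactly why Lemma \ref{lem:comm} is phrased for $\mathbb{D}$-module maps rather than group maps: a plain group quotient could fail to be compatible with halving. The remaining verifications are routine.
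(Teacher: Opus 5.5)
Your proposal is correct and follows the paper's proof: pass to the finitely generated $\mathbb{D}$-submodule spanned by $\supp(u)$, apply Lemma \ref{lem:comm} to get a finite quotient that is injective on the support, note that $\mathbb{D}$-linear maps are midpoint-quandle homomorphisms, and finish with Proposition \ref{pro:finite-comm}. (A cosmetic point: to conclude $S=\{y\}$ in the last step, take $y$ to be the unique element of $S$ with $\pi(y)$ equal to the support point of $\widetilde\pi(u)$, not an arbitrary preimage.)
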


\begin{proof}
    The proof is nearly identical to the proof of Theorem \ref{thm:takasaki}. The only differences are that we use Proposition \ref{pro:finite-comm} and Lemma \ref{lem:comm} instead of Proposition \ref{pro:finite-takasaki} and Lemma \ref{lem:takasaki}, and we work with $\mathbb{D}$-modules and midpoint quandles instead of abelian groups and Takasaki quandles. The same proof works because every $\mathbb{D}$-module homomorphism is also a homomorphism of the underlying midpoint quandles; cf.\ \cite{ta-2026}*{Prop.\ 5.1}.
\end{proof} 

\begin{cor}
    If $A$ is a $\mathbb{D}$-module, then $\Aut(\Z[A\Mid])\cong\Aut(A\Mid)$.
\end{cor}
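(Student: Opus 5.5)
The plan mirrors Corollary \ref{cor:aut-tak}: combine Theorem \ref{thm:comm} with the general results relating ring automorphisms of quandle rings to quandle automorphisms, namely \cite{bardakov-elhamdadi-2026}*{Prop.\ 2.4} and \cite{bardakov-2017}*{Thm.\ 4.2}.

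\emph{Step 1: reduce to the quandle automorphism group.} Let $f\in\Aut(\Z[A\Mid])$. Since $f$ is a ring automorphism, it sends idempotents to idempotents, so $f(e_x)$ is a nonzero idempotent for every $x\in A$. By Theorem \ref{thm:comm}, $A\Mid$ satisfies Conjecture \ref{Main conjecture}, so $\Z[A\Mid]$ has only trivial idempotents. Hence $f(e_x)=e_{g(x)}$ for a unique $g(x)\in A$.

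\emph{Step 2: $g$ is a quandle automorphism.} Multiplicativity of $f$ gives $e_{g(x*y)}=f(e_xe_y)=e_{g(x)}e_{g(y)}=e_{g(x)*g(y)}$, so $g$ is a quandle homomorphism. The same argument applied to $f^{-1}$ produces an inverse for $g$, so $g\in\Aut(A\Mid)$.

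\emph{Step 3: the isomorphism.} Every $f$ is $\Z$-linear and so is determined by its values on the basis $\{e_x\}$. Thus $f\mapsto g$ is an injective group homomorphism $\Aut(\Z[A\Mid])\to\Aut(A\Mid)$. It is surjective because any quandle automorphism induces a ring automorphism by functoriality of $X\mapsto\Z[X]$. This is exactly the content of \cite{bardakov-elhamdadi-2026}*{Prop.\ 2.4}, which I would cite directly.

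There is no real obstacle once Theorem \ref{thm:comm} is available. The only thing to check is that the cited proposition needs only the absence of nontrivial idempotents, not extra hypotheses such as finiteness, and the argument above confirms this.

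Unlike the Takasaki case, no further identification of $\Aut(A\Mid)$, such as $A\rtimes\Aut_{\mathbb D}(A)$ via \cite{bardakov-2017}*{Thm.\ 4.2}, is needed, because the corollary is stated directly in terms of $\Aut(A\Mid)$.
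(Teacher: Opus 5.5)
Your proposal is correct and is essentially the paper's proof, which combines Theorem \ref{thm:comm} with \cite{bardakov-elhamdadi-2026}*{Prop.\ 2.4}; your Steps 1--3 just unpack what that proposition does. As you conclude yourself, \cite{bardakov-2017}*{Thm.\ 4.2} is not needed here, so you can drop it from your opening sentence.
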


\begin{proof}
    Combine Theorem \ref{thm:comm} with \cite{bardakov-elhamdadi-2026}*{Prop.\ 2.4}.
\end{proof}

The upcoming paper \cite{sangare} contains a computation of $\Aut(A\Mid)$ in the case that $A$ is finite. 

\section{Solution to a problem of Jab\l onowski}\label{sec:jab}
In \cite{counterexample}*{Ques.\ 9.2}, Jab\l onowski asked whether his sufficient condition for certain finite latin Alexander quandles to violate Conjecture \ref{Main conjecture} (see Theorem 6.1 in \emph{op.\ cit.}) is also necessary. In this section, we give a positive answer to this problem using our Fourier-analytic results for Alexander quandles.

\subsection{Alexander quandles, revisited}
First, we establish some notation. 
Henceforth, we fix a finite latin Alexander quandle $\Alex(A,\phi)$. Let $\A$ be the Pontryagin dual of $A$, and let $\iota\in\Aut(\A)$ denote the inversion map $\chi\mapsto\overline\chi$. A short calculation shows that
\begin{equation}\label{eq:conjugate2}
    \overline{\hat{f}(\chi)}=(\hat{\overline f}\circ\iota)(\chi)
\end{equation}
for all functions $f\colon A\to\C$, where $\hat f\colon\A\to\C$ denotes the Fourier transform of $f$. 
On the other hand, as in the proof of Proposition \ref{pro:finite-comm}, we have group automorphisms $\Phi,\Psi\in\Aut(\A)$ defined by
\[
\Phi(\chi)\coloneq\chi\circ\phi,\qquad\Psi(\chi)\coloneq\chi\circ(\Id-\phi).
\]
Note that $\Phi$ and $\Psi$ commute, and they also commute with $\iota$ because $\Phi,\Psi\in\Aut(\A)$.

Identify the group algebra $\C[\A]$ as the $\C$-algebra of functions $\A\to\C$. 
Given a group endomorphism $f\in\End(\A)$, let $\Tilde{f}\in\End_\C(\C[\A])$ denote precomposition with $f$ in $\C[\A]$. Let $V\coloneq\ker(\Tilde{\iota}-\Id_{\C[\A]})$ be the $\tilde\iota$-invariant subspace of $\C[\A]$. 
Since $\Phi$ and $\Psi$ commute with $\iota$, both $\PHI$ and $\PSI$ commute with $\Tilde{\iota}$, so $V$ is stable under the map $\PHI+\PSI\in\End_\C(\C[\A])$. Let $M\in\End_\C(V)$ denote the restriction of $\PHI+\PSI$ to $V$. 

Recall that for each element $u\in\C[\Alex(A,\phi)]$, we write $u=\sum_{x\in A}\alpha_xe_x$ and define $\alpha\colon A\to\C$ and $\Alpha\colon\A\to\C$ as in \eqref{eq:ft}. In the following proof, the construction of the function $v\colon\A\to\R$ is inspired by the proof of \cite{counterexample}*{Prop.\ 3.3}.

\begin{pro}\label{prop:specm}
    Let $\Alex(A,\phi)$ be a finite latin Alexander quandle. If $\Z[\Alex(A,\phi)]$ contains a nontrivial idempotent $u=\sum_{x\in A}\alpha_xe_x$, then $1$ is an eigenvalue of $M$.
\end{pro}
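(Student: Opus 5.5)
The plan is to build an explicit nonzero vector $v\in V$ with $Mv=v$ out of the Fourier transform $\Alpha$ of the idempotent $u$. Following the idea behind \cite{counterexample}*{Prop.\ 3.3}, take logarithms of absolute values: the multiplicative identity of Proposition \ref{prop:alex} then becomes additive, which is exactly the eigenvalue equation for $\PHI+\PSI$.

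\textbf{Step 1 (zero set).} Let $Z\coloneq\{\chi\in\A\mid\Alpha(\chi)\neq 0\}$. By Proposition \ref{prop:alex},
\[
\Alpha(\chi)=\Alpha(\Phi(\chi))\,\Alpha(\Psi(\chi))
\]
for all $\chi\in\A$. Hence $\chi\in Z$ forces $\Phi(\chi)\in Z$ and $\Psi(\chi)\in Z$, so $\Phi(Z)\subseteq Z$ and $\Psi(Z)\subseteq Z$. Since $\Phi$ and $\Psi$ are permutations of the finite set $\A$, these inclusions are equalities. Therefore the complement $\A\setminus Z$ is also stable under $\Phi$ and $\Psi$.

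Since the $\alpha_x$ are integers, hence real, \eqref{eq:conjugate2} gives $\Alpha(\overline\chi)=\overline{\Alpha(\chi)}$. Consequently $|\Alpha|$ is $\iota$-invariant and $\iota(Z)=Z$.

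\textbf{Step 2 (the vector).} Define $v\colon\A\to\R$ by $v(\chi)\coloneq\log|\Alpha(\chi)|$ for $\chi\in Z$ and $v(\chi)\coloneq 0$ for $\chi\notin Z$. By Step 1, $v\circ\iota=v$, so $v\in V$. We check $Mv=v$, that is, $v(\Phi(\chi))+v(\Psi(\chi))=v(\chi)$ for every $\chi$.
\begin{itemize}
\item If $\chi\in Z$, then $\Phi(\chi),\Psi(\chi)\in Z$. Taking $\log|\cdot|$ of the identity in Proposition \ref{prop:alex} gives the equation.
\item If $\chi\notin Z$, then $\Phi(\chi),\Psi(\chi)\notin Z$ by Step 1, so both sides vanish.
\end{itemize}

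\textbf{Step 3 (nonvanishing).} It remains to see that $v\neq 0$, and this is where nontriviality of $u$ enters. If $v$ were identically zero, then $|\Alpha(\chi)|\in\{0,1\}$ for all $\chi\in\A$. Lemma \ref{lem:parseval} would then force $u$ to be a trivial idempotent, a contradiction. Hence some $\chi\in Z$ has $|\Alpha(\chi)|\neq1$, so $v(\chi)\neq0$, and $v$ is an eigenvector of $M$ with eigenvalue $1$.

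The main point needing care is Step 1: the definition of $v$ off $Z$ works only because $\Phi$ and $\Psi$ preserve the complement of $Z$. That in turn relies on finiteness and bijectivity, and in particular on latinness, which makes $\Psi$ an automorphism. The rest is a direct translation of Proposition \ref{prop:alex} and Lemma \ref{lem:parseval}.
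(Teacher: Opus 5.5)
Your proposal is correct and matches the paper's proof step for step. Both arguments use the support of $\Alpha$, which is stable under $\Phi$, $\Psi$, and $\iota$ by finiteness and bijectivity. Both then take $v=\log|\Alpha|$ on that support and $v=0$ off it, check $Mv=v$ casewise, and obtain $v\neq 0$ from Lemma \ref{lem:parseval} together with nontriviality of $u$.
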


\begin{proof}
    Consider the set $S\coloneq\{\chi\in\A\mid\Alpha(\chi)\neq 0\}$. Recall from Proposition \ref{prop:alex} that
    \begin{equation}\label{eq:specm}
        \Alpha(\chi)=\Alpha(\Phi(\chi))\Alpha(\Psi(\chi))
    \end{equation}
    for all $\chi\in\A$. It follows that $\Alpha(\Phi(\chi))\neq 0$ and $\Alpha(\Psi(\chi))\neq 0$ for all $\chi\in S$, so $S$ is stable under $\Phi$ and $\Psi$. Since $\Phi,\Psi\in\Aut(\A)$ and $\A$ is finite, in fact $\Phi(S)=\Psi(S)=S$.
    On the other hand, since $\alpha$ is real-valued (by way of being integer-valued), taking $f\coloneq\alpha$ in \eqref{eq:conjugate2} shows that $\iota(S)=S$ as well. 

    Identify $\C[\A]$ with the $\C$-algebra of functions $\A\to\C$, and define
    \[
    v\colon \A\to\R\subset\C,\qquad v(\chi)\coloneq\begin{cases}
        \log|\Alpha(\chi)|&\text{if }\chi\in S,\\
        0&\text{if }\chi\notin S.
    \end{cases}
    \]
    Note that $v\in V$ because $|\Alpha|$ is $\iota$-invariant. Moreover, $v\neq 0$. Indeed, if $v\equiv 0$, then $|\Alpha(\chi)|\in\{0,1\}$ for all $\chi\in \A$, so Lemma \ref{lem:parseval} states that $u$ is trivial; this contradicts our hypothesis. 
    
    Therefore, it suffices to show that $Mv=v$. 
    For all $\chi\in S$, we have
    \[
    (Mv)(\chi)=v(\Phi(\chi))+v(\Psi(\chi))=\log|\Alpha(\Phi(\chi))\Alpha(\Psi(\chi))|=v(\chi),
    \]
    where in the last equality we have used \eqref{eq:specm}. For all $\chi\in\A\setminus S$, we have $\Phi(\chi),\Psi(\chi)\in\A\setminus  S$ as discussed in the first paragraph, so
    \[
    (Mv)(\chi)=v(\Phi(\chi))+v(\Psi(\chi))=0=v(\chi),
    \]
    as desired. 
\end{proof}

\subsection{Specialization to Jab\l onowski's problem}

Fix an odd prime $p$, and pick $A\coloneq\Z/p\Z$. Given a unit $x\in A^\times$, let $f_x\in\Aut(A)$ denote multiplication by $x$. Fix a unit $a\in A^\times$ such that $a\neq 1$, and consider the latin Alexander quandle $\Alex(A,f_a)$. Given a positive integer $n\geq 1$, let $\zeta_n$ be a primitive $n$th root of unity.

\begin{lem}\label{lem:characters}
    In the above setting, define $V\leq\C[\A]$ and $M\in\End_\C(V)$ as before. The following are equivalent:
    \begin{enumerate}
        \item There exists a nontrivial multiplicative character $\chi$ of $A^\times=(\Z/p\Z)^\times$ such that $\chi(-1)=1$ and $\{\chi(a),\chi(1-a)\}=\{\zeta_6,\zeta_6\inv\}$.
        \item There exists a nontrivial multiplicative character $\chi$ of $A^\times=(\Z/p\Z)^\times$ such that $\chi(-1)=1$ and $\chi(a)+\chi(1-a)=1$.
        \item $1$ is an eigenvalue of $M$.
    \end{enumerate}
    If any of these conditions hold, then $p\equiv 1\pmod{12}$.
\end{lem}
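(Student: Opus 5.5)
The plan is to diagonalize $M$ explicitly. Then $(3)$ becomes a statement about multiplicative characters, and $(1)\Leftrightarrow(2)$ becomes an elementary fact about roots of unity.

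\emph{Setting up the dual.} Identify $\A$ with $\Z/p\Z$ via $t\mapsto\chi_t$, where $\chi_t(x)=\zeta_p^{tx}$. Since $\phi=f_a$, we have
\[
\Phi(\chi_t)=\chi_{at},\qquad \Psi(\chi_t)=\chi_{(1-a)t},\qquad \iota(\chi_t)=\chi_{-t}.
\]
Hence $\C[\A]$ is the space of functions $g\colon\Z/p\Z\to\C$. The subspace $V$ consists of the even functions, and $(Mg)(t)=g(at)+g((1-a)t)$. Both $a$ and $1-a$ are units, because $a\in A^\times$ and $a\neq 1$. Therefore $M$ preserves the decomposition
\[
V=\C\delta_0\oplus V^\times,
\]
where $V^\times$ is the space of even functions supported on $A^\times$.

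\emph{Computing the spectrum of $M$.} On $\C\delta_0$ we have $M\delta_0=2\delta_0$, so this summand contributes the eigenvalue $2$. The space $V^\times$ has a basis consisting of the multiplicative characters $\chi$ of $A^\times$ with $\chi(-1)=1$, extended by zero at $0$. These are exactly the even ones, and there are $(p-1)/2$ of them, which matches $\dim V^\times$. For each such $\chi$,
\[
(M\chi)(t)=\chi(at)+\chi((1-a)t)=\bigl(\chi(a)+\chi(1-a)\bigr)\chi(t).
\]
So $M$ is diagonalizable with eigenvalues $2$ and $\chi(a)+\chi(1-a)$ for the even $\chi$. The trivial character also gives the eigenvalue $2$. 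Hence $1$ is an eigenvalue of $M$ if and only if some nontrivial even $\chi$ satisfies $\chi(a)+\chi(1-a)=1$, which is $(2)\Leftrightarrow(3)$.

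\emph{The equivalence $(1)\Leftrightarrow(2)$.} The direction $(1)\Rightarrow(2)$ follows from $\zeta_6+\zeta_6^{-1}=2\cos(\pi/3)=1$. For $(2)\Rightarrow(1)$, put $z=\chi(a)$ and $w=\chi(1-a)$. Then $|z|=1$ and $|1-z|=|w|=1$, so $z$ lies on both unit circles centred at $0$ and at $1$. Thus $z=e^{\pm i\pi/3}$ and $w=1-z=\bar z$, so $\{z,w\}=\{\zeta_6,\zeta_6^{-1}\}$.

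\emph{The congruence.} If $(1)$ holds, then $\chi$ takes a primitive $6$th root of unity as a value, so $6$ divides the order of $\chi$. Since $\chi(-1)=1$, the character $\chi$ factors through the cyclic group $A^\times/\{\pm1\}$ of order $(p-1)/2$. Hence $6\mid (p-1)/2$, i.e.\ $p\equiv1\pmod{12}$.

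The only step needing care is checking that the even characters span all of $V^\times$ and that the point $0$ contributes nothing besides the eigenvalue $2$. Both reduce to counting dimensions and to $a,1-a$ being units, so I expect no real obstacle.
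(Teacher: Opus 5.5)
Your proposal is correct and follows essentially the same route as the paper: you identify $\A$ with $\Z/p\Z$ so that $\Phi,\Psi,\iota$ become multiplication by $a$, $1-a$, $-1$, then diagonalize $M$ on $V$ using the delta function at $0$ together with the even multiplicative characters extended by zero, and read off the spectrum $\{2\}\cup\{\chi(a)+\chi(1-a)\}$. The only differences are that you prove $(2)\Rightarrow(1)$ directly by intersecting the unit circles centred at $0$ and $1$, where the paper cites Jab\l onowski's Lemma 4.2, and that you spell out the deduction $p\equiv 1\pmod{12}$ via the cyclic group $A^\times/\{\pm1\}$, which the paper leaves implicit.
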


\begin{proof}
    The equivalence of the first two conditions follows from the fact that the only two roots of unity that sum to $1$ are $\zeta_6$ and $\zeta_6\inv$; see \cite{counterexample}*{Lem.\ 4.2}. The first condition also implies the final assertion.

    We prove the equivalence of the second two conditions. Since $A=\Z/p\Z$, there is a canonical isomorphism \[A\xrightarrow{\sim}\A,\quad k\mapsto\omega_k,\quad \omega_k(n)\coloneq\zeta_p^{nk}.\] (See, for example, \cite{fourier}*{Cor.\ 2.3.4}.) 
    Under this identification, $\Phi$, $\Psi$, and $\iota$ act on $\A$ via multiplication by $a$, $1-a$, and $-1$, respectively. 
    
    Identify $\C[\A]$ as the $\C$-algebra of functions $\A\to\C$, and let $G$ be the subgroup of $\Aut(\C[\A])$ generated by $\PHI$, $\PSI$, and $\tilde\iota$. 
    Let $g$ be a generator of $A^\times$, and consider the group of multiplicative characters 
    \[\widehat{A^\times}=\{\chi_k\mid k\in A^\times\},\qquad \chi_k(g)\coloneq\zeta_{p-1}^{k-1}\] 
    of $A^\times$. Extend each $\chi_k\in\widehat{A^\times}$ to a function $\chi_k\colon A\to\C$ by defining $\chi_k(0)\coloneq 0$. Then 
    \[
    \C[\A]=\C\omega_0\oplus\operatorname{span}_\C\{\chi_k\mid k\in A^\times\}
    \]
    is a decomposition of $\C[\A]$ into $G$-invariant subspaces.
    Note that the induced action of $M$ on $\C\omega_0$ is multiplication by $2$. 

    For all $x\in A^\times$, the automorphism $f_x\in\Aut(A)$ induces an automorphism $F_x\in\Aut(\C[\A])$. For all $\chi_k\in\widehat{A^\times}$, we have
    \[
    (F_x\chi_k)(n)=\chi_k(xn)=\chi_k(x)\chi_k(n)
    \]
    for all $n\in A$, so $F_x\chi_k=\chi_k(x)\chi_k$. In particular, taking $x\coloneq a,1-a,-1$ shows that
    \[
    M\chi_k=(\chi_k(a)+\chi_k(1-a))\chi_k,\qquad \tilde\iota\chi_k=\chi_k(-1)\chi_k
    \]
    for all $\chi_k\in\widehat{A^\times}$. Hence, $\{\omega_0,\chi_1\dots,\chi_{p-1}\}$ is an eigenbasis of $\C[\A]$ for the pairwise commuting linear maps $\PHI$, $\PSI$, and $\tilde\iota$. In particular, the $\tilde\iota$-invariant subspace of $\C[\A]$ is
    \[
    V=\ker(\tilde\iota-\Id_{\C[\A]})=\C\omega_0\oplus\bigoplus_{\substack{k\in A^\times\\\chi_k(-1)=1}}\C\chi_k,
    \]
    which consists of eigenvectors of $M$. The corresponding eigenvalues are 
    \[
    \operatorname{Spec}M=\{2\}\cup\{\chi_k(a)+\chi_k(1-a)\mid \chi_k\in \widehat{A^\times},\ \chi_k(-1)=1\}.
    \]
    In particular, the eigenvalues corresponding to $\omega_0$ and the trivial character $\chi_1$ of $A^\times$ are both $2$, so the equivalence of the second two conditions in the claim follows immediately.
\end{proof}

We prove the converse to \cite{counterexample}*{Thm.\ 6.1}, thus solving a problem of Jab\l onowski (see Question 9.2 in \emph{op.\ cit.}). Given $a\in(\Z/p\Z)^\times$, recall that $f_a\in\Aut(\Z/p\Z)$ denotes multiplication by $a$.
\begin{thm}\label{thm:jab}
    Let $p$ be an odd prime, and let $a\in(\Z/p\Z)^\times$. If $a\neq 1$, then the following are equivalent:
    \begin{enumerate}
        \item The quandle ring $\Z[\Alex(\Z/p\Z,f_a)]$ contains a nontrivial idempotent.
        \item Any of the equivalent conditions of Lemma \ref{lem:characters} hold.
        \item The operator $H_a\otimes_\Z\C$ constructed in Section 4 of \emph{op.\ cit.\ }has a nontrivial kernel.
    \end{enumerate}
    If any of these conditions hold, then $p\equiv 1\pmod{12}$.
\end{thm}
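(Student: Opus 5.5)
The plan is to prove the implications (1)$\Rightarrow$(2)$\Rightarrow$(1) and (2)$\Leftrightarrow$(3), using Jab\l onowski's results from \cite{counterexample} for the directions already available there. The new content is (1)$\Rightarrow$(2).

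\emph{(1)$\Rightarrow$(2).} Suppose $\Z[\Alex(\Z/p\Z,f_a)]$ contains a nontrivial idempotent $u$. Since $p$ is odd and $a\neq 1$, the endomorphism $\Id-f_a=f_{1-a}$ is invertible, so $\Alex(\Z/p\Z,f_a)$ is a finite latin Alexander quandle. Proposition~\ref{prop:specm} then shows that $1$ is an eigenvalue of $M$. This is condition (3) of Lemma~\ref{lem:characters}, which gives (2).

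\emph{(2)$\Rightarrow$(1).} Here I would invoke \cite{counterexample}*{Thm.\ 6.1}. That theorem is exactly Jab\l onowski's sufficient condition: the existence of a nontrivial even multiplicative character $\chi$ with $\{\chi(a),\chi(1-a)\}=\{\zeta_6,\zeta_6^{-1}\}$ yields a nontrivial idempotent in $\Z[\Alex(\Z/p\Z,f_a)]$. The main care needed is checking that his hypothesis matches condition (1) of Lemma~\ref{lem:characters} verbatim. This includes whether his characters are normalized by $\chi(0)=0$, whether he requires evenness $\chi(-1)=1$, and whether his Alexander convention $x*y=ax+(1-a)y$ agrees with ours or with the one where $a$ and $1-a$ are swapped. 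The condition is symmetric in $a$ and $1-a$, so any swap is harmless.

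\emph{(2)$\Leftrightarrow$(3).} The operator $H_a$ of \cite{counterexample}*{Sec.\ 4} should be, after tensoring with $\C$, the linear map on even functions on $(\Z/p\Z)^\times$ (or on $\Z/p\Z$) given by $h\mapsto h\circ f_a+h\circ f_{1-a}-h$, possibly up to transpose or a change of basis. If so, $H_a\otimes_\Z\C$ is conjugate to $M-\Id$, restricted to the complement of the $\omega_0$ line or not. The eigenvector computation in the proof of Lemma~\ref{lem:characters} diagonalizes it in the basis $\{\chi_k\}$ of even characters, with eigenvalues $\chi_k(a)+\chi_k(1-a)-1$, plus the eigenvalue $1$ on $\omega_0$ and $\chi_1$. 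A nontrivial kernel is then equivalent to some nontrivial even $\chi_k$ satisfying $\chi_k(a)+\chi_k(1-a)=1$, which is condition (2) of the lemma. Alternatively, if Jab\l onowski already proves in op.\ cit.\ that the kernel condition is equivalent to his character condition (his Prop.\ 3.3/Lem.\ 4.2 line of argument), I would cite that directly.

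The final assertion $p\equiv 1\pmod{12}$ is inherited from Lemma~\ref{lem:characters}. The main obstacle is the identification step for (3): matching the precise definition of $H_a$ in op.\ cit.\ with $M-\Id$, including the treatment of the $\omega_0$ and trivial-character components. I would first write $H_a$ in the character basis and confirm that its spectrum is $\{\chi(a)+\chi(1-a)-1\}$.
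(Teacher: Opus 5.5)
Your proposal is correct and follows the paper's route: the only new input is (1)$\Rightarrow$(2) via Proposition~\ref{prop:specm} and Lemma~\ref{lem:characters}, and the remaining implications are cited from \cite{counterexample}. One correction to your bookkeeping: in op.\ cit.\ Theorem~6.1 takes the kernel condition (3) as its hypothesis (not the character condition), and Theorem~4.1 supplies (2)$\Leftrightarrow$(3), so the paper simply chains (1)$\Rightarrow$(2)$\Leftrightarrow$(3)$\Rightarrow$(1) and does not need your conjectural identification of $H_a\otimes_\Z\C$ with $M-\Id$.
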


\begin{proof}
    It is a special case of Proposition \ref{prop:specm} that (1) implies the third condition of Lemma \ref{lem:characters}. 
    Theorem 4.1 of \emph{op.\ cit.\ }states that the second condition of Lemma \ref{lem:characters} is equivalent to (3). Finally, Theorem 6.1 of \emph{op.\ cit.\ }states that (3) implies (1). The final assertion is taken directly from Lemma \ref{lem:characters}.
\end{proof}

Also, since $\Aut(\Z/p\Z)=\{f_a\mid a\in(\Z/p\Z)^\times\}$ and $\Alex(\Z/p\Z,f_a)$ is latin if and only if $a\neq 1$, we deduce the following.

\begin{cor}\label{cor:primes}
    Let $p$ be a prime such that $p\not\equiv 1\pmod{12}$. Then every latin quandle of the form $\Alex(\Z/p\Z,\phi)$ satisfies Conjecture \ref{Main conjecture}.
\end{cor}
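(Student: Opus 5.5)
The plan is to reduce Corollary \ref{cor:primes} directly to Theorem \ref{thm:jab}, using that every automorphism of $\Z/p\Z$ is multiplication by a unit. First dispose of $p=2$. Here $\Aut(\Z/2\Z)$ is trivial, so the only Alexander quandle is $\Alex(\Z/2\Z,f_1)$. This quandle is trivial, since $x*y=x$, and hence not latin, so the statement is vacuous. From now on let $p$ be an odd prime with $p\not\equiv 1\pmod{12}$.

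Let $X=\Alex(\Z/p\Z,\phi)$ be latin. Since $\Aut(\Z/p\Z)=\{f_a\mid a\in(\Z/p\Z)^\times\}$, we may write $\phi=f_a$ for some unit $a$. Latinity means $\Id-f_a=f_{1-a}$ is invertible, which forces $a\neq 1$. Conversely, if $a\neq 1$, then $1-a$ is a unit, so all latin quandles of this form are exactly the $\Alex(\Z/p\Z,f_a)$ with $a\neq 1$.

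Now suppose, for contradiction, that $\Z[X]$ has a nontrivial idempotent. This is condition (1) of Theorem \ref{thm:jab}, so the final assertion of that theorem gives $p\equiv 1\pmod{12}$, contradicting our hypothesis. Hence $\Z[X]$ has only trivial idempotents. Since $X$ is finite and latin, it is in particular semi-latin, so this is precisely Conjecture \ref{Main conjecture} for $X$.

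There is no real obstacle, since all the substantive work sits in Proposition \ref{prop:specm} and Lemma \ref{lem:characters}. In fact only the implication from (1) to the third condition of Lemma \ref{lem:characters} is needed, together with that lemma's final assertion. The only points requiring care are the small case $p=2$ and checking that latinity is equivalent to $a\neq 1$.
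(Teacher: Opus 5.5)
Your proposal is correct and is essentially the paper's own deduction: write $\phi=f_a$ using $\Aut(\Z/p\Z)=\{f_a\}$, note that latinity is equivalent to $a\neq 1$, and apply the final assertion of Theorem \ref{thm:jab} in contrapositive form. Your separate treatment of $p=2$, where the only $\Alex(\Z/2\Z,\phi)$ is trivial and hence not latin, is a useful extra check, since Theorem \ref{thm:jab} is stated only for odd primes and the paper leaves this case implicit.
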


%%%%%%%%%

\begin{ack}
    The initial ideas for this paper grew out of the second author's visit to the University of South Florida in April 2026, where he met the first and third authors. The second author thanks USF for its hospitality and for providing travel funding. We also thank {Micha\l} Jab\l onowski for communicating \cite{counterexample} to us.
\end{ack}

\end{document}